\newtheorem{theorem}{Theorem}
\theoremstyle{plain}
\newtheorem{example}{Example}
\newtheorem{lemma}{Lemma}
\newtheorem{proposition}{Proposition}
\numberwithin{equation}{section}
\begin{document}
\title{ YAMABE SOLITONS ON THREE-DIMENSIONAL NORMAL ALMOST PARACONTACT
METRIC MANIFOLDS }
\author{I. K\"{u}peli Erken}
\address{Faculty of Natural Sciences, Architecture and Engineering,
Department of Mathematics, Bursa Technical University, Bursa, TURKEY}
\email{irem.erken@btu.edu.tr}
\subjclass[2010]{ 53C25, 53C21, 53C44, 53D15.}
\keywords{Para-Sasakian manifold, Paracosymplectic manifold, Para-Kenmotsu
manifold, Yamabe Soliton, Ricci Soliton, infinitesimal automorphism,
constant scalar curvature. }

\begin{abstract}
The purpose of the paper is to study Yamabe solitons on three-dimensional
para-Sasakian, paracosymplectic and para-Kenmotsu manifolds. Mainly, we
proved that

$\bullet $If the semi-Riemannian metric of a three-dimensional para-Sasakian
manifold is a Yamabe soliton, then it is of constant scalar curvature, and
the flow vector field $V$ is Killing. In the next step, we proved that
either manifold has constant curvature $-1$ and reduces to an Einstein
manifold, or $V$ is an infinitesimal automorphism of the paracontact metric
structure on the manifold.

$\bullet $If the semi-Riemannian metric of a three-dimensional
paracosymplectic manifold is a Yamabe soliton, then it has constant scalar
curvature. Furthermore either manifold is $\eta $-Einstein, or Ricci flat.

$\bullet $ If the semi-Riemannian metric on a three-dimensional
para-Kenmotsu manifold is a Yamabe soliton, then the manifold is of constant
sectional curvature $-1$, reduces to an Einstein manifold. Furthermore,
Yamabe soliton is expanding with $\lambda =-6$ and the vector field $V$ is
Killing.

Finally, we construct examples to illustrate the results obtained in
previous sections.
\end{abstract}

\maketitle

\section{\textbf{Introduction}}

\label{introduction}

Several years ago, the notion of Yamabe flow was introduced by Richard
Hamilton at the same time as the Ricci flow, (see \cite{HAM1}, \cite{HAM2})
as a tool for constructing metrics of constant scalar curvature in a given
conformal class of Riemannian metrics on $(M^{n},g)(n\geq 3)$. On a smooth
semi-Riemannian manifold, Yamabe flow can be defined as the evolution of the
semi-Riemannian metric $g_{0}$ in time $t$ to $g=g(t)$ by means of the
equation%
\begin{equation*}
\frac{\partial }{\partial t}g=-rg,\text{ }g(0)=g_{0},
\end{equation*}%
where $r$ denotes the scalar curvature corresponds to $g$.

The significance of Yamabe flow lies in the fact that it is a natural
geometric deformation to metrics of constant scalar curvature. One notes
that Yamabe flow corresponds to the fast diffusion case of the porous medium
equation (the plasma equation) in mathematical physics. In dimension $n=2$
the Yamabe flow is equivalent to the Ricci flow (defined by $\frac{\partial 
}{\partial t}g(t)=-2\rho (t)$, where $\rho $ stands for the Ricci tensor).
However in dimension $n>2$ the Yamabe and Ricci flows do not agree, since
the first one preserves the conformal class of the metric but the Ricci flow
does not in general. Just as a Ricci soliton is a special solution of the
Ricci flow, a Yamabe soliton is a special solution of the Yamabe flow that
moves by one parameter family of diffeomorphisms $\varphi _{t}$ generated by
a fixed (time-independent) vector field $V$ on $M$, and homotheties, i.e. $%
g(.,t)=\sigma (t)\varphi _{t}$ $^{\ast }g_{0}$.

A semi-Riemannian manifold $(M^{n},g)$ is a Yamabe soliton \cite{CHOW} if it
admits a vector field $V$ such that%
\begin{equation}
\mathcal{L}_{V}g=(\lambda -r)g,  \label{yamabe soliton}
\end{equation}%
where $\mathcal{L}_{V}$ denotes the Lie derivative in the direction of the
vector field $V$ and $\lambda $ is a real number. Moreover, a vector field $%
V $ as in the definition is called a \textit{soliton vector field} (for $%
(M^{n},g)$). In the particular case of $V$ being a gradient, i.e., $V$ $%
=\nabla f$ for some potential function $f$, $(M^{n},g)$ is said to be a 
\textit{gradient Yamabe soliton} and $V$ is called a \textit{gradient
soliton vector field }(for $(M^{n},g)$). A Yamabe soliton is said to be 
\textit{shrinking, steady} or \textit{expanding} if it admits a soliton
vector field for which, respectively, $\lambda >0,$ $\lambda =0,~\lambda <0$.

A\textit{\ Ricci soliton} (see \cite{CHOW}) is a natural generalization of
Einstein metric (that is, the Ricci tensor is a constant multiple of the
semi-Riemannian metric $g)$. A Ricci soliton $(g,V,\lambda )$ is defined on
a semi-Riemannian manifold $(M^{n},g)$ by%
\begin{equation}
(\mathcal{L}_{V}g)(X,Y)+2S(X,Y)+2\mu g(X,Y)=0,  \label{Ricci Soliton}
\end{equation}%
where $\mathcal{L}_{V}g$ denotes the Lie derivative of semi-Riemannian
metric $g$ along a vector field $V$, $\mu $ is a constant, and $X,Y$ are
arbitrary vector fields on $M$. It is clear that a Ricci soliton with $V$
zero or a Killing vector field reduces to an Einstein metric. A Ricci
soliton is said to be \textit{shrinking, steady }and \textit{expanding}
according as $\mu $ is\textit{\ negative, zero }and\textit{\ positive},
respectively. Also, Ricci solitons have been studied extensively in the
context of semi-Riemannian geometry; we may refer to \cite{bejan}, \cite%
{CALVARUSO 1}, \cite{CALVARUSO 2}, \cite{CALVARUSO 3}, \cite{Pina} and
references therein.

Yamabe solitons coincide with Ricci solitons (defined by (\ref{Ricci Soliton}%
)) in dimension $n=2$. In higher dimensions Ricci solitons and Yamabe
solitons have different behaviours. For instance, since any soliton vector
field is a conformal vector field, if the scalar curvature is constant then
it must be necessarily zero unless the soliton vector field is Killing [\cite%
{kuhnel}, Corollary 2.2(i)].

In this study, we make the first contribution to investigate Yamabe solitons
on paracontact geometry. Yamabe solitons on three dimensional Sasakian
manifolds and Kenmotsu manifolds were studied respectively by R. Sharma \cite%
{sharma} and Y. Wang \cite{wang}.

The outline of the article goes as follows. In Section $2$, we recall basic
facts and some results related with almost paracontact manifolds which we
will need throughout the paper. Section $3$ is devoted to Yamabe solitons on
three dimensional para-Sasakian manifolds. Our first main result about
para-Sasakian manifolds is that if the semi-Riemannian metric of a
three-dimensional para-Sasakian manifold is a Yamabe soliton, then it has
constant scalar curvature, and the vector field $V$ is Killing. Furthermore,
either manifold has constant curvature $-1$ and reduces to an Einstein
manifold, or $V$ is an infinitesimal automorphism of the paracontact metric
structure on the manifold$.$ Our second main result about para-Sasakian
manifolds is that if manifold admits a Yamabe soliton and $V$ is pointwise
collinear vector field with the structure vector field $\xi $, then $V$ is a
constant multiple of $\xi $. Also, we show that if the semi-Riemannian
metric of a three dimensional para-Sasakian manifold with $r=-6$ is a Yamabe
soliton, then it is also a Ricci soliton. Section $4$ is devoted to Yamabe
solitons on three dimensional paracosymplectic manifolds. Our main result
about paracosymplectic manifolds is that if the semi-Riemannian metric of a
three-dimensional paracosymplectic manifold is a Yamabe soliton, then it has
constant scalar curvature. Furthermore, either manifold is $\eta $-Einstein,
or Ricci flat. Section $5$ is devoted to Yamabe solitons on three
dimensional para-Kenmotsu manifolds. Our first main result about
para-Kenmotsu manifolds is that if the semi-Riemannian metric on a
three-dimensional para-Kenmotsu manifold is a Yamabe soliton, then the
manifold is of constant sectional curvature $-1$, reduces to an Einstein
manifold. Furthermore, Yamabe soliton is expanding with $\lambda =-6$ and
the vector field $V$ is Killing. Our second main result about para-Kenmotsu
manifolds is that if the semi-Riemannian metric of a three dimensional
para-Kenmotsu manifold is a Yamabe soliton, then it is also a Ricci soliton.
At last, we show that if three-dimensional para-Kenmotsu manifold admits a
Yamabe soliton for a vector field $V$ and a constant $\lambda $, then $V$
can not be pointwise collinear with $\xi $. Finally, we construct examples
to illustrate the results obtained in previous sections.

\section{Preliminaries}

\label{preliminaries}

In this section we collect the formulas and results we need on paracontact
metric manifolds. All manifolds are assumed to be connected and smooth. We
may refer to \cite{kaneyuki1}, \cite{Za} and references therein for more
information about paracontact metric geometry.

Paracontact metric structures were introduced in \cite{kaneyuki1}, as a
natural odd-dimensional counterpart to paraHermitian structures, like
contact metric structures correspond to the Hermitian ones. Paracontact
metric manifolds $(M^{2n+1},\varphi ,\xi ,\eta ,g)$ have been studied by
many authors in the recent years, particularly since the appearance of \cite%
{Za}. The curvature identities for different classes of almost paracontact
metric manifolds were obtained e.g. in \cite{DACKO}, \cite{Welyczko}, \cite%
{Za}. The importance of paracontact geometry, and in particular of
para-Sasakian geometry, has been pointed out especially in the last years by
several papers highlighting the interplays with the theory of para-K\"{a}%
hler manifolds and its role in semi-Riemannian geometry and mathematical
physics (cf. e.g.,\cite{Biz} ,\cite{kupmur}).

An $(2n+1)$-dimensional smooth manifold $M$ is said to have an \emph{almost
paracontact structure} if it admits a $(1,1)$-tensor field $\varphi $, a
vector field $\xi $ and a $1$-form $\eta $ satisfying the following
conditions:

\begin{enumerate}
\item[(i)] $\eta (\xi )=1$,\ $\varphi ^{2}=I-\eta \otimes \xi $,

\item[(ii)] the tensor field $\varphi $ induces an almost paracomplex
structure on each fibre of ${\mathcal{D}}=\ker (\eta )$, i.e. the $\pm 1$%
-eigendistributions, ${\mathcal{D}}^{\pm }={\mathcal{D}}_{\varphi }(\pm 1)$
of $\varphi $ have equal dimension $n$.
\end{enumerate}

From the definition it follows that $\varphi \xi =0$, $\eta \circ \varphi =0$
and the endomorphism $\varphi $ has rank $2n$. We denote by $[\varphi
,\varphi ]$ the Nijenhuis torsion 
\begin{equation*}
\lbrack \varphi ,\varphi ](X,Y)=\varphi ^{2}[X,Y]+[\varphi X,\varphi
Y]-\varphi \lbrack \varphi X,Y]-\varphi \lbrack X,\varphi Y].
\end{equation*}%
When the tensor field $N_{\varphi }=[\varphi ,\varphi ]-2d\eta \otimes \xi $
vanishes identically the almost paracontact manifold is said to be \emph{%
normal}. If an almost paracontact manifold admits a semi-Riemannian metric $%
g $ such that 
\begin{equation}
g(\varphi X,\varphi Y)=-g(X,Y)+\eta (X)\eta (Y),  \label{G METRIC}
\end{equation}%
for all $X,Y\in \chi (M)$, then we say that $(M,\varphi ,\xi ,\eta ,g)$ is
an \emph{almost paracontact metric manifold}. Notice that any such a
semi-Riemannian metric is necessarily of signature $(n+1,n)$. For an almost
paracontact metric manifold, there always exists an orthogonal basis $%
\{X_{1},\ldots ,X_{n},Y_{1},\ldots ,Y_{n},\xi \}$, such that $%
g(X_{i},X_{j})=\delta _{ij}$, $g(Y_{i},Y_{j})=-\delta _{ij}$, $%
g(X_{i},Y_{j})=0$, $g(\xi ,X_{i})=g(\xi ,Y_{j})=0$, and $Y_{i}=\varphi X_{i}$%
, for any $i,j\in \left\{ 1,\ldots ,n\right\} $. Such basis is called a $%
\varphi $-basis.

We can now define the \textit{fundamental 2-form} of the almost paracontact
metric manifold by $\Phi (X,Y)=g(X,\varphi Y)$. If $d\eta (X,Y)=$ $%
g(X,\varphi Y)$, then $(M,\varphi ,\xi ,\eta ,g)$ is said to be \emph{%
paracontact metric manifold }. In a paracontact metric manifold one defines
a symmetric, trace-free operator $h=\frac{1}{2}{\mathcal{L}}_{\xi }\varphi $%
, where $\mathcal{L}_{\xi }$, denotes the Lie derivative. It is known \cite%
{Za} that $h$ anti-commutes with $\varphi $ and satisfies $h\xi =0,$ tr$h=$tr%
$h\varphi =0$ and 
\begin{equation}
\nabla \xi =-\varphi +\varphi h,  \label{nablaxi}
\end{equation}%
where $\nabla $ is the Levi-Civita connection of the semi-Riemannian
manifold $(M,g)$.

Moreover $h=0$ if and only if $\xi $ is Killing vector field. In this case $%
(M,\varphi ,\xi ,\eta ,g)$ is said to be a \emph{K-paracontact manifold}. A
normal paracontact metric manifold is called a \emph{para-Sasakian manifold}%
. Also in this context the para-Sasakian condition implies the $K$%
-paracontact condition and the converse holds only in dimension $3$. We also
recall that any para-Sasakian manifold satisfies 
\begin{eqnarray}
R(X,Y)\xi &=&-(\eta (Y)X-\eta (X)Y),  \label{Pasa} \\
(\nabla _{X}\varphi )Y &=&-g(X,Y)\xi +\eta (Y)X,  \label{nablafi} \\
\nabla _{X}\xi &=&-\varphi X,  \label{nablaksi} \\
R(\xi ,X)Y &=&-g(X,Y)\xi +\eta (Y)X,  \label{R} \\
S(X,\xi ) &=&-(n-1)\eta (X),  \label{S}
\end{eqnarray}%
where $Q$ is the Ricci operator, $R$ is the Riemannian curvature tensor and $%
S$ is Ricci tensor defined by $S(X,Y)=g(QX,Y).$

From \cite{oneill} we know that if $M$ has constant curvature $c,$ then%
\begin{equation}
R(X,Y)Z=c(g(Z,X)Y-g(Z,Y)X).  \label{constant curvature}
\end{equation}%
On an almost paracontact metric manifold $M$, if the Ricci operator satisfies%
\begin{equation}
Q=\alpha id+\beta \eta \otimes \xi ,  \label{einstein}
\end{equation}%
where both $\alpha $ and $\beta $ are smooth functions, then the manifold is
said to be an $\eta $\textit{-Einstein manifold. }An $\eta $-Einstein
manifold with $\beta $ vanishing and $\alpha $ a constant is obviously an
Einstein manifold. An $\eta $-Einstein manifold is said to be \textit{proper 
}$\eta $\textit{-Einstein} if $\beta \neq 0$.

We recall that the curvature tensor of a $3$-dimensional semi-Riemannian
manifold satisfies 
\begin{equation}
R(X,Y)Z=g(Y,Z)QX-g(X,Z)QY+g(QY,Z)X-g(QX,Z)Y-\frac{r}{2}(g(Y,Z)X-g(X,Z)Y)
\label{THREE DIM CURVATURE}
\end{equation}%
where $Q$ is the Ricci operator of $M$.

An infinitesimal automorphism is a vector field such that Lie derivatives
along it of all objects of some tensor structure vanish. For an almost
paracontact, metric structure, the condition that a vector $V$ is an
infinitesimal automorphism is as follows:%
\begin{equation}
{\mathcal{L}}_{V}\eta ={\mathcal{L}}_{V}\xi ={\mathcal{L}}_{V}\Phi ={%
\mathcal{L}}_{V}g=0.  \label{infi}
\end{equation}

\section{Yamabe Solitons on three dimensional Para-Sasakian manifolds}

In this section, before presenting our main results about Yamabe Solitons on
three dimensional para-Sasakian manifolds, we will give some lemmas which
will be used later.

A vector field $V$ on an $n$-dimensional semi-Riemannian manifold $(M,g)$ is
said to be \textit{conformal vector field }if,%
\begin{equation}
{\mathcal{L}}_{V}g=2\rho g,  \label{conformal}
\end{equation}%
where $\rho $ is called the \textit{conformal coefficient }(from (\ref%
{yamabe soliton}) we get $\rho =\frac{\lambda -r}{2})$. If conformal
coefficient is zero, we will say that conformal vector field is \textit{%
Killing vector field.}

\begin{lemma}
\cite{yano}\label{1}On an $n$-dimensional semi-Riemann manifold $(M^{n},g)$
endowed with a conformal vector field $V$, we have%
\begin{eqnarray}
({\mathcal{L}}_{V}S)(X,Y) &=&-(n-2)g(\nabla _{X}D_{\rho },Y)+(\Delta \rho
)g(X,Y),  \label{mert1} \\
{\mathcal{L}}_{V}r &=&-2\rho r+2(n-1)\Delta \rho  \label{mert2}
\end{eqnarray}%
for any vector fields $X$ and $Y,$ where $D$ denotes the gradient operator
and $\Delta :=-\func{div}D$ denotes the Laplacian operator of $g$.
\end{lemma}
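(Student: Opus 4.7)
The plan is to derive both identities from a single intermediate object: the Lie derivative of the Christoffel symbols of the Levi-Civita connection, computed from the conformal condition $\mathcal{L}_V g = 2\rho g$. Once $\mathcal{L}_V \Gamma$ is in hand, the Lie derivatives of the Ricci tensor and of the scalar curvature follow from well-known commutation patterns.

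First, I would compute $\mathcal{L}_V \Gamma^{k}_{ij}$ by Lie-differentiating the Koszul-type formula for the Christoffel symbols along $V$. Using $\mathcal{L}_V g_{ij} = 2\rho g_{ij}$, a short manipulation yields the classical expression
\begin{equation*}
\mathcal{L}_V \Gamma^{k}_{ij} = \delta^{k}_{i}\,\nabla_j \rho + \delta^{k}_{j}\,\nabla_i \rho - g_{ij}\, g^{kl}\nabla_l \rho .
\end{equation*}
Next, I would substitute this into the standard identity $(\mathcal{L}_V R)^{l}{}_{ijk} = \nabla_i (\mathcal{L}_V \Gamma^{l}_{jk}) - \nabla_j (\mathcal{L}_V \Gamma^{l}_{ik})$ and contract $l$ with $i$ to obtain the Ricci component. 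Two of the four resulting terms collapse, and what survives is
\begin{equation*}
(\mathcal{L}_V S)_{jk} = -(n-2)\,\nabla_j \nabla_k \rho + g_{jk}\, \nabla^{l}\nabla_l \rho ,
\end{equation*}
which, under the sign convention $\Delta := -\operatorname{div} D$ adopted in the paper, is exactly (\ref{mert1}) once written in coordinate-free form as $-(n-2)\,g(\nabla_X D_\rho, Y) + (\Delta \rho)\,g(X,Y)$.

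For (\ref{mert2}), I would contract (\ref{mert1}) with $g^{jk}$ after invoking the Leibniz rule for $\mathcal{L}_V$, remembering that the inverse metric transforms by $\mathcal{L}_V g^{jk} = -2\rho g^{jk}$. Writing $r = g^{jk}S_{jk}$ gives
\begin{equation*}
\mathcal{L}_V r = -2\rho\, r + g^{jk}(\mathcal{L}_V S)_{jk} = -2\rho\, r + \bigl((n-2)+n\bigr)\Delta \rho = -2\rho\, r + 2(n-1)\Delta \rho .
\end{equation*}

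The only real obstacle here is careful bookkeeping: tracking the nonstandard sign in $\Delta = -\operatorname{div} D$, correctly handling the $-2\rho g^{jk}$ contribution from $\mathcal{L}_V g^{jk}$ in the final contraction, and verifying that the two unwanted Hessian terms really cancel when passing from $\mathcal{L}_V R$ to $\mathcal{L}_V S$. No deeper geometric input is required beyond these standard conformal identities, which is why I would expect the proof to be essentially a direct computation.
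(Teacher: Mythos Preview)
The paper does not supply its own proof of this lemma: it is stated with the citation \cite{yano} and immediately followed by Lemma~\ref{2}, so there is nothing in the paper to compare your argument against. Your approach---Lie-differentiating the Christoffel symbols using $\mathcal{L}_V g = 2\rho g$, passing to $\mathcal{L}_V R$, and contracting---is precisely the classical derivation found in Yano's book, and it is correct in outline.

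One bookkeeping slip to flag (you anticipated this yourself): in your displayed intermediate formula you write $+\,g_{jk}\nabla^{l}\nabla_{l}\rho$, but the contraction actually produces $-\,g_{jk}\nabla^{l}\nabla_{l}\rho$. Since the paper's convention is $\Delta = -\operatorname{div}D = -\nabla^{l}\nabla_{l}$, the minus sign is what turns that term into $+(\Delta\rho)g_{jk}$, matching (\ref{mert1}). Your final trace computation for (\ref{mert2}) is consistent with the corrected sign, so the conclusion stands; just fix the sign in the intermediate Ricci display so the chain of equalities is internally consistent.
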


\begin{lemma}
\label{2}For a para-Sasakian manifold, the following relations are valid:%
\begin{eqnarray}
(i)\eta ({\mathcal{L}}_{V}\xi ) &=&\frac{r-\lambda }{2},  \label{mert3} \\
(ii)({\mathcal{L}}_{V}\eta )(\xi ) &=&\frac{\lambda -r}{2}.  \label{mert4}
\end{eqnarray}
\end{lemma}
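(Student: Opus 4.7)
The plan is to derive both identities from a single algebraic manipulation of the Yamabe soliton equation evaluated at $\xi$, combined with the elementary fact that $\eta(\xi) = 1$ is constant so its Lie derivative vanishes.

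First I would record the two ambient facts that follow directly from the almost paracontact metric axioms: setting $X = Y = \xi$ in (\ref{G METRIC}) and using $\varphi \xi = 0$ gives $g(\xi,\xi) = 1$, and setting $Y = \xi$ in (\ref{G METRIC}) with $\varphi\xi = 0$ gives $g(X,\xi) = \eta(X)$ for every vector field $X$. Neither uses the para-Sasakian hypothesis beyond the metric structure, but both will be essential.

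For part (i), I would apply the Yamabe soliton equation (\ref{yamabe soliton}) to the pair $(\xi,\xi)$, which yields $(\mathcal{L}_V g)(\xi,\xi) = (\lambda - r)\,g(\xi,\xi) = \lambda - r$. On the other hand, the product rule for Lie derivatives gives
\begin{equation*}
0 = V\bigl(g(\xi,\xi)\bigr) = (\mathcal{L}_V g)(\xi,\xi) + 2\,g(\mathcal{L}_V \xi,\xi) = (\mathcal{L}_V g)(\xi,\xi) + 2\,\eta(\mathcal{L}_V \xi),
\end{equation*}
where the last equality uses $g(X,\xi) = \eta(X)$. Substituting the value of $(\mathcal{L}_V g)(\xi,\xi)$ and solving yields $\eta(\mathcal{L}_V \xi) = (r-\lambda)/2$, which is (\ref{mert3}).

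For part (ii), I would differentiate the identity $\eta(\xi) = 1$ along $V$ and use the product rule for Lie derivatives of the pairing between a $1$-form and a vector field:
\begin{equation*}
0 = \mathcal{L}_V\bigl(\eta(\xi)\bigr) = (\mathcal{L}_V\eta)(\xi) + \eta(\mathcal{L}_V \xi).
\end{equation*}
Substituting the value of $\eta(\mathcal{L}_V \xi)$ obtained in the previous step gives $(\mathcal{L}_V\eta)(\xi) = (\lambda-r)/2$, establishing (\ref{mert4}). There is no real obstacle here: the whole argument is two applications of the Leibniz rule together with the observation that $\xi$ has unit norm and $g(\cdot,\xi)$ agrees with $\eta$; the para-Sasakian structure is not actually needed beyond the paracontact metric compatibility.
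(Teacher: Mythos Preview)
Your argument is correct and follows essentially the same route as the paper: Lie-differentiate $g(\xi,\xi)=1$ and combine with the Yamabe soliton equation to obtain (i), then Lie-differentiate $\eta(\xi)=1$ and use (i) to obtain (ii). Your observation that only the paracontact metric compatibility is needed (not the full para-Sasakian structure) is also consistent with the paper, which later reuses the same lemma verbatim for paracosymplectic and para-Kenmotsu manifolds.
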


\begin{proof}
Since the Reeb vector field $\xi $ is a unit vector field we have $g(\xi
,\xi )=1$. Taking the Lie-derivative of this relation along the vector field 
$V$ and using (\ref{yamabe soliton}) we get $(i)$. Using $\eta (\xi )=1$ and 
$(i)$, we have $(ii)$.
\end{proof}

\begin{lemma}
\label{2.5}For any three-dimensional para-Sasakian manifold $(M^{3},\varphi
,\xi ,\eta ,g)$, we have%
\begin{equation}
\xi (r)=0.  \label{mer4.1}
\end{equation}
\end{lemma}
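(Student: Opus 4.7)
The plan is to exploit the three-dimensional curvature identity (\ref{THREE DIM CURVATURE}) to force the Ricci operator into a very restricted $\eta$-Einstein form, and then compare its divergence with the contracted second Bianchi identity $\operatorname{div} Q = \tfrac{1}{2}\nabla r$.

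First I would contract (\ref{Pasa}) to obtain $S(X,\xi) = -2\eta(X)$, equivalently $Q\xi = -2\xi$. Setting $Z=\xi$ in (\ref{THREE DIM CURVATURE}) and subtracting (\ref{Pasa}) cancels the Riemann tensor and leaves
\begin{equation*}
\eta(Y)\,QX - \eta(X)\,QY \;=\; \Bigl(\tfrac{r}{2}+1\Bigr)\bigl(\eta(Y)X - \eta(X)Y\bigr).
\end{equation*}
Specialising to $Y=\xi$ and substituting $Q\xi=-2\xi$ then yields the $\eta$-Einstein expression
\begin{equation*}
QX \;=\; \Bigl(\tfrac{r}{2}+1\Bigr)X \;-\; \Bigl(\tfrac{r}{2}+3\Bigr)\eta(X)\,\xi. \qquad (\star)
\end{equation*}

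Next I would differentiate $(\star)$ covariantly and trace against a $\varphi$-basis $\{e_1,\ldots,\xi\}$ with $g(e_i,e_i)=\epsilon_i$, using $\nabla_Y\xi = -\varphi Y$ from (\ref{nablaksi}). The two structural traces that arise,
\begin{equation*}
\sum_i \epsilon_i (\nabla_{e_i}\eta)(e_i) \;=\; -\operatorname{tr}\Phi \qquad \text{and} \qquad \sum_i \epsilon_i \eta(e_i)\,\nabla_{e_i}\xi \;=\; \nabla_\xi \xi \;=\; -\varphi\xi,
\end{equation*}
both vanish (the first by skew-symmetry of the fundamental $2$-form, the second because $\varphi\xi=0$), so the remaining terms collapse to
\begin{equation*}
\operatorname{div} Q \;=\; \tfrac{1}{2}\nabla r \;-\; \tfrac{1}{2}\xi(r)\,\xi.
\end{equation*}
Comparing this with the Bianchi identity $\operatorname{div} Q = \tfrac{1}{2}\nabla r$ forces $\xi(r)\,\xi = 0$, and hence $\xi(r) = 0$.

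The main obstacle is careful bookkeeping of the signature signs $\epsilon_i$ in the semi-Riemannian traces, especially verifying that $\sum_i \epsilon_i \eta(e_i) e_i = \xi$ and correctly distributing the covariant derivative across the $\eta(X)\xi$ term in $(\star)$. A much shorter alternative would be to invoke that every para-Sasakian manifold is $K$-paracontact so $\xi$ is Killing, from which $\xi(r) = \mathcal{L}_\xi r = 0$ follows at once; the computational path sketched above, however, has the bonus of producing the $\eta$-Einstein formula $(\star)$ for $Q$, which can be reused throughout the rest of the section.
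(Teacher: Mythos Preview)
Your proposal is correct and follows essentially the same route as the paper: derive the $\eta$-Einstein form $QX=(\tfrac{r}{2}+1)X-(\tfrac{r}{2}+3)\eta(X)\xi$ from (\ref{THREE DIM CURVATURE}) together with (\ref{Pasa}) and $Q\xi=-2\xi$, then feed it into the contracted second Bianchi identity $\operatorname{trace}\{Y\mapsto(\nabla_YQ)X\}=\tfrac12X(r)$ to force $\xi(r)=0$. Your remark that the $K$-paracontact condition already makes $\xi$ Killing and hence $\xi(r)=0$ is a valid shortcut not used in the paper, though as you note the longer computation is worthwhile because the formula $(\star)$ is precisely the paper's (\ref{MERT4.2}), reused repeatedly afterwards.
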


\begin{proof}
If we replace $Y=Z$ by $\xi $ in (\ref{THREE DIM CURVATURE}) and use (\ref%
{Pasa}), (\ref{S}) we get%
\begin{equation}
QX=\left( \frac{r}{2}+1\right) X-\left( \frac{r}{2}+3\right) \eta (X)\xi
\label{MERT4.2}
\end{equation}%
for any vector field $X\in \chi (M)$. If we use (\ref{MERT4.2}), (\ref%
{nablaksi}) and (\ref{S}) in the following well known formula for
semi-Riemannian manifolds 
\begin{equation*}
trace\left\{ Y\rightarrow (\nabla _{Y}Q)X\right\} =\frac{1}{2}\nabla _{X}r
\end{equation*}%
we obtain%
\begin{equation}
\xi (r)=0.  \label{mert4.3}
\end{equation}
\end{proof}

From (\ref{MERT4.2}), we get

\begin{lemma}
\label{3}For a three-dimensional para-Sasakian manifold $(M^{3},\varphi ,\xi
,\eta ,g)$, the Ricci tensor $S$ is given by%
\begin{equation}
S(X,Y)=\left( \frac{r}{2}+1\right) g(X,Y)-\left( \frac{r}{2}+3\right) \eta
(X)\eta (Y)  \label{MERT5}
\end{equation}%
for any vector fields $X,Y\in \chi (M)$.
\end{lemma}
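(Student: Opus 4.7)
The plan is to derive this identity as a direct algebraic consequence of equation (\ref{MERT4.2}), which was already established in the proof of Lemma \ref{2.5}. That equation gives an explicit formula for the Ricci operator $Q$ on a three-dimensional para-Sasakian manifold, and the Ricci tensor $S$ is related to $Q$ by the standard identity $S(X,Y)=g(QX,Y)$, so the task reduces to evaluating this inner product.

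First I would take $X\in\chi(M)$ and apply (\ref{MERT4.2}) to write
\begin{equation*}
QX=\left(\tfrac{r}{2}+1\right)X-\left(\tfrac{r}{2}+3\right)\eta(X)\xi.
\end{equation*}
Then I would pair with an arbitrary $Y\in\chi(M)$ via $g$, using linearity of the metric, to obtain
\begin{equation*}
S(X,Y)=g(QX,Y)=\left(\tfrac{r}{2}+1\right)g(X,Y)-\left(\tfrac{r}{2}+3\right)\eta(X)\,g(\xi,Y).
\end{equation*}

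The only small point that needs justification is the replacement of $g(\xi,Y)$ by $\eta(Y)$. This follows from the compatibility relation (\ref{G METRIC}): setting $X=\xi$ and using $\varphi\xi=0$ and $\eta(\xi)=1$ yields $0=-g(\xi,Y)+\eta(Y)$, hence $g(\xi,Y)=\eta(Y)$. Substituting this gives exactly (\ref{MERT5}).

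There is no real obstacle here; the lemma is essentially a reformulation of (\ref{MERT4.2}) at the level of $(0,2)$-tensors rather than $(1,1)$-tensors. The bulk of the genuine computation was already carried out in the derivation of (\ref{MERT4.2}) within Lemma \ref{2.5}, where the three-dimensional curvature identity (\ref{THREE DIM CURVATURE}) was combined with the para-Sasakian specializations (\ref{Pasa}) and (\ref{S}).
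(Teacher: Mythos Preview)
Your proposal is correct and follows exactly the paper's own approach: the paper simply states ``From (\ref{MERT4.2}), we get'' before Lemma \ref{3}, so the lemma is indeed just the $(0,2)$-tensor restatement of the Ricci operator formula already obtained in the proof of Lemma \ref{2.5}. Your added justification that $g(\xi,Y)=\eta(Y)$ is a harmless bit of extra detail.
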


\begin{lemma}
\label{4}Suppose that the semi-Riemannian metric of a three-dimensional
para-Sasakian manifold $(M^{3},\varphi ,\xi ,\eta ,g)$ is a Yamabe soliton.
If the scalar curvature of $M^{3}$ is harmonic, that is $\Delta r=0$, then $%
\lambda =r.$
\end{lemma}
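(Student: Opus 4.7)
The strategy is to compute $(\mathcal{L}_V S)(\xi,\xi)$ in two independent ways and equate the results. The Yamabe soliton hypothesis (\ref{yamabe soliton}) makes $V$ a conformal vector field with conformal factor $\rho=(\lambda-r)/2$, so the conformal-field identities (\ref{mert1})--(\ref{mert2}) apply in dimension three; meanwhile the para-Sasakian structure rigidly controls both $S(\xi,\xi)$ and $\eta(\mathcal{L}_V\xi)$, so the two sides of the resulting equality can be read off almost mechanically.

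For the first evaluation I would specialize (\ref{mert1}) to $X=Y=\xi$ with $n=3$, obtaining $(\mathcal{L}_V S)(\xi,\xi)=-g(\nabla_{\xi}D_{\rho},\xi)+\Delta\rho$. The hypothesis $\Delta r=0$ together with the constancy of $\lambda$ immediately forces $\Delta\rho=0$. Expanding $g(\nabla_{\xi}D_{\rho},\xi)=\xi(\xi(\rho))-g(D_{\rho},\nabla_{\xi}\xi)$ and invoking $\nabla_{\xi}\xi=-\varphi\xi=0$ from (\ref{nablaksi}) together with $\xi(r)=0$ from (\ref{mer4.1}), both pieces vanish, so $(\mathcal{L}_V S)(\xi,\xi)=0$.

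For the second evaluation I would use the explicit Ricci formula (\ref{MERT5}): taking $X=Y=\xi$ gives $S(\xi,\xi)=-2$, a constant, so the Leibniz rule for a $(0,2)$-tensor collapses to $(\mathcal{L}_V S)(\xi,\xi)=-2S(\mathcal{L}_V\xi,\xi)$. Since (\ref{MERT5}) also gives $S(X,\xi)=-2\eta(X)$ and (\ref{mert3}) supplies $\eta(\mathcal{L}_V\xi)=(r-\lambda)/2$, this yields $(\mathcal{L}_V S)(\xi,\xi)=2(r-\lambda)$.

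Comparing the two values gives $2(r-\lambda)=0$, hence $\lambda=r$. I do not foresee a serious obstacle; the only place requiring a little care is the expansion of $g(\nabla_{\xi}D_{\rho},\xi)$, where $\nabla_{\xi}\xi=0$ and $\xi(r)=0$ must be invoked at the correct step, after which the conclusion is purely algebraic. It is worth noting that (\ref{mert2}) alone gives only $V(r)=(r-\lambda)r$ when $\Delta r=0$, which is insufficient; the $\xi$-rigidity of the para-Sasakian Ricci tensor is precisely what closes the argument.
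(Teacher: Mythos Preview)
Your argument is correct and rests on the same idea as the paper's: compare two evaluations of $(\mathcal{L}_V S)(\xi,\xi)$, one coming from the conformal identity (\ref{mert1}) and the other from the para-Sasakian Ricci formula (\ref{MERT5}), using $\xi(r)=0$ and $\nabla_\xi\xi=0$ to kill the Hessian term.

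The execution differs slightly. The paper first Lie-differentiates the full formula (\ref{MERT5}) and matches it against (\ref{mert6}) to obtain the tensor identity (\ref{mert9}), and only then sets $X=Y=\xi$, arriving at $0=\Delta r+4(r-\lambda)$ (valid without the harmonic hypothesis), from which $\lambda=r$ follows once $\Delta r=0$. You instead specialize to $\xi$ immediately and exploit the shortcut that $S(\xi,\xi)=-2$ is constant, so the Leibniz rule reduces the second evaluation to $-2S(\mathcal{L}_V\xi,\xi)=2(r-\lambda)$ via (\ref{mert3}). Your route is more economical for this lemma in isolation; the paper's route has the advantage that the intermediate identities (\ref{mert9}) and (\ref{mert10}) are reused verbatim in the proof of Theorem~\ref{5}.
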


\begin{proof}
Since $V$ is a conformal vector field with $\rho =\frac{\lambda -r}{2}$, for 
$n=3$ from equations (\ref{mert1}) and (\ref{mert2}), we obtain%
\begin{eqnarray}
({\mathcal{L}}_{V}S)(X,Y) &=&\frac{1}{2}g(\nabla _{X}D_{r},Y)-\frac{1}{2}%
(\Delta r)g(X,Y),  \label{mert6} \\
{\mathcal{L}}_{V}r &=&r(r-\lambda )-2\Delta r  \label{mert7}
\end{eqnarray}%
for any vector fields $X,Y\in \chi (M)$. If we take the Lie-derivative of (%
\ref{MERT5}) in the direction of $V$ and using (\ref{yamabe soliton}), (\ref%
{mert7}), we get%
\begin{eqnarray}
({\mathcal{L}}_{V}S)(X,Y) &=&(-\Delta r+\lambda -r)g(X,Y)+\left( \Delta r+%
\frac{r}{2}(\lambda -r)\right) \eta (X)\eta (Y)  \notag \\
&&-\left( \frac{r}{2}+3\right) \left\{ ({\mathcal{L}}_{V}\eta )(X)\eta (Y)+({%
\mathcal{L}}_{V}\eta )(Y)\eta (X)\right\}  \label{MERT8}
\end{eqnarray}%
for any vector fields $X,Y\in \chi (M)$. In view of (\ref{mert6}) and (\ref%
{MERT8}), we obtain%
\begin{eqnarray}
g(\nabla _{X}D_{r},Y) &=&(-\Delta r+2(\lambda -r))g(X,Y)+(2\Delta
r+r(\lambda -r))\eta (X)\eta (Y)  \notag \\
&&-(r+6)\left\{ ({\mathcal{L}}_{V}\eta )(X)\eta (Y)+({\mathcal{L}}_{V}\eta
)(Y)\eta (X)\right\}  \label{mert9}
\end{eqnarray}%
for any vector fields $X,Y\in \chi (M)$. Setting $X=Y=\xi $ in (\ref{mert9})
and using (\ref{mert4}), (\ref{mer4.1}),we get%
\begin{equation}
0=\Delta r+4(r-\lambda ).  \label{mert10}
\end{equation}%
From the last equation, the proof ends.
\end{proof}

\begin{theorem}
\label{5}If the semi-Riemannian metric of a three-dimensional para-Sasakian
manifold $(M^{3},\varphi ,\xi ,\eta ,g)$ is a Yamabe soliton, then it has
constant scalar curvature, and the vector field $V$ is Killing. Furthermore,
either $(M^{3},\varphi ,\xi ,\eta ,g)$ has constant curvature -1 and reduces
to an Einstein manifold, or $V$ is an infinitesimal automorphism of the
paracontact metric structure on $(M^{3},\varphi ,\xi ,\eta ,g).$
\end{theorem}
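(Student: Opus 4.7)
The plan is to push the consequences of (\ref{mert9}) -- already derived in the proof of Lemma \ref{4} -- one decisive step further, in order to conclude that the scalar curvature is constant; once this is in hand, $V$ is Killing and the dichotomy follows cleanly. First, I would specialize (\ref{mert9}) at $Y=\xi$ and compare with the identity $g(\nabla_X D_r,\xi)=(\varphi X)(r)$, which follows from (\ref{nablaksi}) and $\xi(r)=0$ (Lemma \ref{2.5}). Together with $(\mathcal{L}_V\eta)(X) = g(\mathcal{L}_V\xi, X) + (\lambda - r)\eta(X)$, obtained by differentiating $\eta(X)=g(\xi,X)$ along $V$ and using the Yamabe equation, this produces the compact formula $D_r = (r+6)\,\varphi A$, where $A:=\mathcal{L}_V\xi$. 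The pivotal observation is that $A$ itself is Killing: since $\xi$ is Killing on a para-Sasakian manifold and $\xi(r)=0$,
\[
\mathcal{L}_A g = \mathcal{L}_{[V,\xi]}\,g = [\mathcal{L}_V,\mathcal{L}_\xi]\,g = -\mathcal{L}_\xi\bigl((\lambda-r)g\bigr) = \xi(r)\,g = 0.
\]

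Next, I would substitute $D_r=(r+6)\varphi A$ into $\nabla_X D_r$, using (\ref{nablafi}) to expand $\nabla_X(\varphi A)$, and equate with (\ref{mert9}) after invoking (\ref{mert10}). Evaluating in a $\varphi$-basis $\{e_1,\,e_2=\varphi e_1,\,\xi\}$ with $g(e_1,e_1)=1$, $g(e_2,e_2)=-1$, at $(X,Y)=(e_1,e_1)$ and $(e_2,e_2)$ and adding the two identities, the terms proportional to $g(X,Y)$ cancel (since $g(e_1,e_1)+g(e_2,e_2)=0$), and the cross term $g(\nabla_{e_1}A,e_2)+g(\nabla_{e_2}A,e_1)$ vanishes by the Killing property of $A$. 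What remains is
\[
\frac{e_1(r)^2 + e_2(r)^2}{r+6} = 0
\]
on the open set $\{r\neq -6\}$, forcing $e_1(r)=e_2(r)=0$ there; combined with $\xi(r)=0$, this gives $D_r=0$ on this set, and a standard continuity argument extends it to all of $M$. Thus $r$ is constant, (\ref{mert10}) gives $\lambda=r$, and (\ref{yamabe soliton}) shows $V$ is Killing.

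Finally, since $V$ is Killing we have $\mathcal{L}_V S=0$, so plugging $\Delta r=0$ and $\lambda-r=0$ into (\ref{MERT8}) yields $(\tfrac{r}{2}+3)\{(\mathcal{L}_V\eta)(X)\eta(Y)+(\mathcal{L}_V\eta)(Y)\eta(X)\}=0$. If $r=-6$, then (\ref{MERT5}) gives $S=-2g$ and (\ref{THREE DIM CURVATURE}) then forces constant sectional curvature $-1$. Otherwise, setting $Y=\xi$ and using $(\mathcal{L}_V\eta)(\xi)=(\lambda-r)/2=0$ gives $\mathcal{L}_V\eta=0$; combined with $\mathcal{L}_V g=0$, this yields $\mathcal{L}_V\xi=0$ (from $g(\mathcal{L}_V\xi,X)=(\mathcal{L}_V\eta)(X)$) and $\mathcal{L}_V\Phi=\mathcal{L}_V d\eta=d\mathcal{L}_V\eta=0$, so $V$ is an infinitesimal automorphism as in (\ref{infi}). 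The main obstacle is the step producing $D_r=0$; the crucial trick is recognizing that $A=\mathcal{L}_V\xi$ is automatically Killing, which is exactly what makes the cross-derivative terms in the Hessian identity cancel.
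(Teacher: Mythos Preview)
Your argument is correct and takes a genuinely different route to the constancy of $r$ than the paper's proof. The paper, after deriving (\ref{mert 11.5}), substitutes back into (\ref{mert9}) to obtain a closed-form expression for the Hessian $\nabla_X D_r$ (equation (\ref{mert12})), then carries out a lengthy curvature computation: it differentiates again, forms $R(X,Y)D_r$, contracts over $Y$ in a local orthonormal frame to obtain $S(X,D_r)=0$, and finally invokes (\ref{MERT5}) to deduce $(r+2)X(r)=0$. Your approach bypasses this second-order machinery entirely by recognising the structural identity $D_r=(r+6)\varphi A$ with $A=\mathcal{L}_V\xi$, and then exploiting the fact that $A$ is automatically Killing (a consequence of $\xi$ being Killing and $\xi(r)=0$) to kill the cross terms when the Hessian identity is summed over a $\varphi$-basis. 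This yields $e_1(r)^2+e_2(r)^2=0$ on $\{r\neq -6\}$ in one stroke, which is actually a sharper intermediate conclusion than the paper's $(r+2)X(r)=0$. The trade-off is that your proof hinges on spotting that $[V,\xi]$ is Killing, whereas the paper's computation, though longer, is mechanical. For the final dichotomy your argument is essentially the same as the paper's, though your use of $\Phi=d\eta$ to obtain $\mathcal{L}_V\Phi=d\mathcal{L}_V\eta=0$ is slightly cleaner than the paper's appeal to $\Phi(X,Y)=g(X,\varphi Y)$.
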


\begin{proof}
If we differentiate covariantly (\ref{mer4.1}) along the direction of an
arbitrary vector field $X$ and use (\ref{nablaksi}), we have%
\begin{equation}
\varphi X(r)=g(\nabla _{X}D_{r},\xi ).  \label{mert11}
\end{equation}%
Putting $\xi $ for $Y$ in (\ref{mert9}), using (\ref{mert4}), (\ref{mert10}%
), (\ref{mert11}), we obtain%
\begin{equation}
(\lambda -r)\left( \frac{r}{2}+3\right) \eta (X)-\varphi X(r)=(r+6)({%
\mathcal{L}}_{V}\eta )(X)  \label{mert 11.5}
\end{equation}%
for any vector field $X\in \chi (M)$. Making use of last equation and (\ref%
{mert10}) in (\ref{mert9}), we deduce%
\begin{equation}
\nabla _{X}D_{r}=-2(\lambda -r)(X-\eta (X)\xi )+\varphi X(r)\xi -\eta
(X)\varphi Dr  \label{mert12}
\end{equation}%
for any vector field $X\in \chi (M)$.

Differentiating covariantly along the direction of $Y$, we get%
\begin{eqnarray}
\nabla _{Y}\nabla _{X}D_{r} &=&2Y(r)\left[ X-\eta (X)\xi \right]
\label{mert13} \\
&&-2(\lambda -r)\left[ \nabla _{Y}X+g(\varphi Y,X)\xi -g(\nabla _{Y}X,\xi
)\xi +\eta (X)\varphi Y\right]  \notag \\
&&+Y(\varphi X(r))\xi -(\varphi X(r))\varphi Y  \notag \\
&&-\eta (X)\left[ -g(Y,Dr)\xi +\varphi \nabla _{Y}Dr\right]  \notag \\
&&-\left[ -g(\varphi Y,X)+g(\nabla _{Y}X,\xi )\right] \varphi Dr\text{.} 
\notag
\end{eqnarray}%
Replacing $X$ and $Y$ in (\ref{mert13}), we obtain%
\begin{eqnarray}
\nabla _{X}\nabla _{Y}D_{r} &=&2X(r)\left[ Y-\eta (Y)\xi \right]
\label{MERT14} \\
&&-2(\lambda -r)\left[ \nabla _{X}Y+g(\varphi X,Y)\xi -g(\nabla _{X}Y,\xi
)\xi +\eta (Y)\varphi X\right]  \notag \\
&&+X(\varphi Y(r))\xi -(\varphi Y(r))\varphi X  \notag \\
&&-\eta (Y)\left[ -g(X,Dr)\xi +\varphi \nabla _{X}Dr\right]  \notag \\
&&-\left[ -g(\varphi X,Y)+g(\nabla _{X}Y,\xi )\right] \varphi Dr\text{.} 
\notag
\end{eqnarray}%
From (\ref{mert12}), we have%
\begin{eqnarray}
\nabla _{\left[ Y,X\right] }D_{r} &=&-2(\lambda -r)\left[ \nabla
_{Y}X-\nabla _{X}Y-g(\nabla _{Y}X,\xi )\xi +g(\nabla _{X}Y,\xi )\xi \right]
\label{MERT15} \\
&&+g(\varphi \nabla _{Y}X,Dr)\xi -g(\varphi \nabla _{X}Y,Dr)\xi  \notag \\
&&-\left[ g(\nabla _{Y}X,\xi )-g(\nabla _{X}Y,\xi )\right] \varphi Dr\text{.}
\notag
\end{eqnarray}%
If we put (\ref{mert13}), (\ref{MERT14}) and (\ref{MERT15}) in the
Riemannian curvature tensor $R$ equation $(R(X,Y)D_{r}=\nabla _{X}\nabla
_{Y}D_{r}-\nabla _{Y}\nabla _{X}D_{r}-\nabla _{\left[ X,Y\right] }D_{r})$
and contracting over $Y$ (we assume $(e_{i})$ $(i=1,2,3)$ to be a local
orthonormal frame on $M$), we obtain%
\begin{eqnarray}
S(X,D_{r}) &=&\dsum\limits_{i=1}^{3}\varepsilon _{i}g(R(e_{i},X)D_{r},e_{i})
\notag \\
&=&-\eta (X)\dsum\limits_{i=1}^{3}g(\varphi \nabla _{e_{i}}D_{r},e_{i})
\label{mert16}
\end{eqnarray}%
where $i$ is summer over $1,2,3$. Using (\ref{mert12}) in the last equation,
we have $S(X,D_{r})=0$. Taking account (\ref{MERT5}), we have $(r+2)X_{r}=0$
which shows that $r$ is constant. By virtue of (\ref{mert10}), we get $%
\lambda =r$. The equation (\ref{yamabe soliton}) leads to $\mathcal{L}%
_{V}g=0 $, namely $V$ is Killing.

If $r=-6$, from (\ref{MERT5}), we have $S=-2g(X,Y)$, hence $M$ is an
Einstein manifold. Taking account of (\ref{constant curvature}) and (\ref%
{THREE DIM CURVATURE}), $M$ has constant curvature -1.

If $r\neq -6$, (\ref{mert 11.5}) gives $\mathcal{L}_{V}\eta =0$. By virtue
of this and being $V$ is Killing, we have $\mathcal{L}_{V}\xi =0$. If we
take the Lie-derivative of the well known equation$\ \Phi (X,Y)=g(X,\varphi
Y)$ in the direction of $V$, we\ get $\mathcal{L}_{V}\Phi =0$. From (\ref%
{infi}), we can conclude that $V$ is an infinitesimal automorphism of the
paracontact metric structure on $(M^{3},\varphi ,\xi ,\eta ,g)$. So, the
proof ends.
\end{proof}

\begin{proposition}
\label{5.5}Let $(M^{3},\varphi ,\xi ,\eta ,g)$ be a three-dimensional
para-Sasakian manifold. If $M^{3}$ admits a Yamabe soliton and $V$ is
pointwise collinear vector field with the structure vector field $\xi $,
then $V$ is a constant multiple of $\xi $.
\end{proposition}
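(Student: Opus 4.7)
The plan is straightforward because Theorem~\ref{5} has already done most of the heavy lifting. Since $V$ is pointwise collinear with the nowhere-vanishing vector field $\xi$, I can write $V = f\xi$ for a uniquely determined smooth function $f\in C^{\infty}(M^{3})$, and the task reduces to showing $df\equiv 0$.

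First I would compute $\mathcal{L}_{V}g$ directly from $V=f\xi$. Using the para-Sasakian identity $\nabla_{X}\xi = -\varphi X$ from \eqref{nablaksi} and the standard formula $(\mathcal{L}_{V}g)(X,Y) = g(\nabla_{X}V,Y)+g(\nabla_{Y}V,X)$, one obtains
\begin{equation*}
(\mathcal{L}_{V}g)(X,Y) = X(f)\eta(Y)+Y(f)\eta(X)-f\bigl(g(\varphi X,Y)+g(\varphi Y,X)\bigr).
\end{equation*}
The fundamental 2-form $\Phi(X,Y)=g(X,\varphi Y)$ is skew-symmetric (an immediate consequence of \eqref{G METRIC} obtained by substituting $X\mapsto\varphi X$ and using $\varphi^{2}=I-\eta\otimes\xi$), so the $\varphi$-bracket cancels and
\begin{equation*}
(\mathcal{L}_{V}g)(X,Y) = X(f)\eta(Y)+Y(f)\eta(X).
\end{equation*}

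Combining this with \eqref{yamabe soliton} gives $X(f)\eta(Y)+Y(f)\eta(X)=(\lambda-r)g(X,Y)$. Now I would invoke Theorem~\ref{5}: under the Yamabe soliton hypothesis the scalar curvature $r$ is constant and $V$ is Killing, so $\mathcal{L}_{V}g\equiv 0$ and hence $\lambda=r$. The identity collapses to $X(f)\eta(Y)+Y(f)\eta(X)=0$; taking $X=Y=\xi$ yields $\xi(f)=0$, and then taking only $Y=\xi$ yields $X(f)=0$ for every vector field $X$. Thus $f$ is constant on the connected manifold $M^{3}$, and $V$ is a constant multiple of $\xi$.

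There is essentially no serious obstacle here. Once Theorem~\ref{5} forces $\lambda=r$, the entire argument is driven by the single Lie-derivative computation above; the only step requiring any care is checking that the $\varphi$-cross terms cancel by the skew-symmetry of $\Phi$.
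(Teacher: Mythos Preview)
Your argument is correct and follows essentially the same route as the paper: write $V=f\xi$, compute $\mathcal{L}_{V}g$ via $\nabla_{X}\xi=-\varphi X$, drop the $\varphi$-cross terms by skew-symmetry, and then evaluate at $Y=\xi$ and $X=Y=\xi$ to force $df=0$. The only difference is cosmetic: the paper silently starts from the Killing equation $g(\nabla_{X}V,Y)+g(\nabla_{Y}V,X)=0$ (implicitly using Theorem~\ref{5}), whereas you make the appeal to Theorem~\ref{5} explicit before collapsing the right-hand side.
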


\begin{proof}
Let $V$ be a pointwise collinear vector field with the structure vector
field $\xi $, that is $V=b\xi $, where $b$ is a smooth function on $M^{3}$.
From (\ref{yamabe soliton}), we obtain%
\begin{equation}
g(\nabla _{X}V,Y)+g(\nabla _{Y}V,X)=0  \label{mert 17}
\end{equation}%
for any vector fields $X,Y\in \chi (M)$.

Taking $V=b\xi $ in (\ref{mert 17}) and using (\ref{nablaksi}), we have%
\begin{equation}
X(b)\eta (Y)+Y(b)\eta (X)=0  \label{MERT 18}
\end{equation}%
for any vector fields $X,Y\in \chi (M)$. Putting $Y=\xi $ in (\ref{MERT 18}%
), we get%
\begin{equation}
X(b)+\xi (b)\eta (X)=0\text{.}  \label{mert19}
\end{equation}%
Replacing $X$ by $\xi $ in the last eqaution, we obtain%
\begin{equation}
\xi (b)=0.  \label{mert20}
\end{equation}%
If we use (\ref{mert20}) in (\ref{mert19}), we have%
\begin{equation*}
X(b)=0
\end{equation*}%
which yields $db=0$, that is, $b=$constant. This completes the proof.
\end{proof}

\begin{theorem}
\label{6}If the semi-Riemannian metric of a three dimensional para-Sasakian
manifold with $r=-6$ is a Yamabe soliton, then it is also a Ricci soliton.
\end{theorem}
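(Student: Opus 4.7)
The plan is to combine Theorem \ref{5} with Lemma \ref{3} specialized at $r=-6$, and then verify the Ricci soliton equation \eqref{Ricci Soliton} directly for a suitable constant $\mu$.

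First, I would invoke Theorem \ref{5}: since the metric is a Yamabe soliton on a three-dimensional para-Sasakian manifold, the scalar curvature is constant and the soliton vector field $V$ is Killing, i.e.\ $\mathcal{L}_{V}g=0$. This takes care of the first summand in \eqref{Ricci Soliton}. Next, I would substitute $r=-6$ into the Ricci tensor formula \eqref{MERT5} from Lemma \ref{3}. The coefficient $\frac{r}{2}+1$ becomes $-2$ and the coefficient $\frac{r}{2}+3$ becomes $0$, so $S(X,Y)=-2g(X,Y)$ for all $X,Y\in\chi(M)$; in particular the manifold is Einstein.

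Now I would plug these two facts into the left-hand side of \eqref{Ricci Soliton}. For any $X,Y\in\chi(M)$,
\begin{equation*}
(\mathcal{L}_{V}g)(X,Y)+2S(X,Y)+2\mu g(X,Y)=0+2(-2)g(X,Y)+2\mu g(X,Y)=(2\mu-4)g(X,Y).
\end{equation*}
Choosing $\mu=2$ makes this identically zero, so $(g,V,2)$ satisfies the Ricci soliton equation, showing that the Yamabe soliton is simultaneously an (expanding) Ricci soliton.

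There is essentially no obstacle here: once Theorem \ref{5} and Lemma \ref{3} are in hand, the statement reduces to a one-line algebraic check. The only thing worth emphasizing in the write-up is that the same vector field $V$ serves as the soliton vector field for both structures, which is automatic because $\mathcal{L}_{V}g=0$ is exactly the Killing condition produced by Theorem \ref{5}.
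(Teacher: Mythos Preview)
Your proof is correct and follows essentially the same approach as the paper: both invoke Theorem~\ref{5} to conclude that $V$ is Killing and that $Q=-2\,\mathrm{id}$ (equivalently $S=-2g$) when $r=-6$, and then verify the Ricci soliton equation \eqref{Ricci Soliton} with $\mu=2$. Your write-up is, if anything, slightly more explicit than the paper's.
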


\begin{proof}
Let $(M^{3},\varphi ,\xi ,\eta ,g)$ be a three-dimensional para-Sasakian
manifold. From Theorem \ref{5} one can say that if $g$ is a Yamabe soliton
with $r=-6$ then the Ricci operator of $(M^{3},\varphi ,\xi ,\eta ,g)$ is $%
Q=-2id$. Therefore (\ref{Ricci Soliton}) returns to%
\begin{equation*}
(\mathcal{L}_{V}g)(X,Y)+2S(X,Y)+2\mu g(X,Y)=0.
\end{equation*}%
Hence, $g$ is an expanding Ricci soliton with $\mu =2$.
\end{proof}

\section{Yamabe Solitons on three dimensional Paracosymplectic manifolds}

An almost paracontact metric manifold $M^{2n+1}$, with a\ structure $%
(\varphi ,\xi ,\eta ,g)$ is said to be an \textit{almost }$\alpha $\textit{%
-paracosymplectic} manifold, if 
\begin{equation}
d\eta =0,\quad d\Phi =2\alpha \eta \wedge \Phi ,  \label{genel}
\end{equation}%
where $\alpha $ may be a constant or function on $M.$

For a particular choices of the function $\alpha =0$, we have \textit{almost
paracosymplectic} manifolds. If additionaly normality conditon is fulfilled,
then manifolds are called \textit{paracosymplectic}. We may refer to \cite%
{DACKO} and \cite{piotr} and references therein for more information about
paracosymplectic manifolds. We also recall that any paracosymplectic
manifold satisfies 
\begin{eqnarray}
R(X,Y)\xi &=&0,  \label{c1} \\
(\nabla _{X}\varphi )Y &=&0,  \label{c2} \\
\nabla _{X}\xi &=&0,  \label{c3} \\
S(X,\xi ) &=&0,  \label{c5}
\end{eqnarray}%
where $Q$ is the Ricci operator, $R$ is the Riemannian curvature tensor and $%
S$ is Ricci tensor defined by $S(X,Y)=g(QX,Y).$

Lemma \ref{2} also valid for paracosymplectic manifolds. For a
three-dimensional paracosymplectic manifold, using (\ref{c1}), (\ref{c5}) in
(\ref{THREE DIM CURVATURE}), we have%
\begin{eqnarray}
QX &=&\frac{r}{2}(X-\eta (X)\xi ),  \label{c6} \\
S(X,Y) &=&\frac{r}{2}(g(X,Y)-\eta (X)\eta (Y))  \label{C7}
\end{eqnarray}%
for any vector field $X\in \chi (M)$. If we use (\ref{c6}) and use the same
procedure in the proof of Lemma \ref{2.5}, we obtain $\xi (r)=0.$

\begin{lemma}
\label{1.}For a three-dimensional paracosymplectic manifold which admits a
Yamabe soliton has always harmonic scalar curvature, that is $\Delta r=0$.
\end{lemma}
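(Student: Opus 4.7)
The plan is to compute the quantity $(\mathcal{L}_V S)(\xi,\xi)$ in two independent ways and equate the outcomes. The first computation will route through Lemma \ref{1} (Yano's formula), and the second will use the explicit expression (\ref{C7}) of the Ricci tensor of a three-dimensional paracosymplectic manifold. The comparison will pin down $\Delta r$.

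First I would invoke Lemma \ref{1} with $n = 3$. Since the Yamabe soliton equation makes $V$ conformal with coefficient $\rho = \frac{\lambda - r}{2}$, one has $D_\rho = -\tfrac12 Dr$ and $\Delta \rho = -\tfrac12 \Delta r$, so
\[
(\mathcal{L}_V S)(X,Y) \;=\; \tfrac12\, g(\nabla_X Dr, Y) \;-\; \tfrac12 (\Delta r)\, g(X,Y).
\]
Setting $X = Y = \xi$ and using $\nabla_\xi \xi = 0$ from (\ref{c3}) together with the already-noted identity $\xi(r) = 0$, I can rewrite
\[
g(\nabla_\xi Dr,\xi) \;=\; \xi(\xi(r)) \;-\; g(Dr, \nabla_\xi \xi) \;=\; 0,
\]
which produces the first expression $(\mathcal{L}_V S)(\xi,\xi) = -\tfrac12 \Delta r$.

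For the second computation I will exploit (\ref{C7}), which gives $S(\xi,\xi) = 0$. Then
\[
(\mathcal{L}_V S)(\xi,\xi) \;=\; V(S(\xi,\xi)) \;-\; 2\,S(\mathcal{L}_V \xi,\xi) \;=\; -2\,S(\mathcal{L}_V \xi,\xi).
\]
The identity $g(\xi,X) = \eta(X)$ (valid on any almost paracontact metric manifold from (\ref{G METRIC})) combined with Lemma \ref{2}(i), which transfers verbatim to the paracosymplectic setting because its proof uses only $g(\xi,\xi) = 1$, $\eta(\xi) = 1$ and the Yamabe soliton equation, yields $g(\mathcal{L}_V \xi,\xi) = \eta(\mathcal{L}_V \xi)$. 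Substituting this into (\ref{C7}) gives $S(\mathcal{L}_V \xi,\xi) = \tfrac{r}{2}(g(\mathcal{L}_V \xi,\xi) - \eta(\mathcal{L}_V \xi)) = 0$, and hence $(\mathcal{L}_V S)(\xi,\xi) = 0$ from this side.

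Equating the two expressions gives $-\tfrac12 \Delta r = 0$, i.e. $\Delta r = 0$. I do not expect any real obstacle: the calculation is essentially a two-line trace at $\xi$, and the only small sanity check is to confirm that Lemma \ref{2} really does carry over to the paracosymplectic case, which is immediate from its proof. The key structural ingredients that make things collapse are $\nabla \xi \equiv 0$ and $S(\xi,\xi) = 0$, both specific to the paracosymplectic class.
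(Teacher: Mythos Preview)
Your argument is correct and follows the same underlying strategy as the paper: compute $(\mathcal{L}_V S)(\xi,\xi)$ in two ways---once via Yano's formula (\ref{mert6}) and once via the explicit Ricci tensor (\ref{C7})---and equate. The execution differs slightly. The paper first Lie-differentiates (\ref{C7}) in full generality (using (\ref{mert7}) for $\mathcal{L}_V r$) to obtain (\ref{C8}), combines this with (\ref{mert6}) to derive the identity (\ref{C9}) for $g(\nabla_X D_r,Y)$, and only then sets $X=Y=\xi$, invoking (\ref{mert4}) to handle the $\mathcal{L}_V\eta$ terms. You instead specialize to $X=Y=\xi$ from the outset and exploit $S(\cdot,\xi)=0$ directly, which bypasses (\ref{mert7}), (\ref{mert4}), and the $\mathcal{L}_V\eta$ bookkeeping entirely. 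Your route is shorter and self-contained; the paper's route has the advantage that the intermediate formula (\ref{C9}) is reused in the proof of Theorem~\ref{2.}. One minor remark: in your second computation you could have concluded $S(\mathcal{L}_V\xi,\xi)=0$ immediately from (\ref{c5}) rather than via the $\eta$-identity, but what you wrote is also fine.
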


\begin{proof}
If we take the Lie-derivative of (\ref{C7}) in the direction of $V$ and
using (\ref{yamabe soliton}), (\ref{mert7}), we get%
\begin{eqnarray}
({\mathcal{L}}_{V}S)(X,Y) &=&(-\Delta r)g(X,Y)+\left( \Delta r+\frac{r}{2}%
(\lambda -r)\right) \eta (X)\eta (Y)  \notag \\
&&-\frac{r}{2}\left\{ ({\mathcal{L}}_{V}\eta )(X)\eta (Y)+({\mathcal{L}}%
_{V}\eta )(Y)\eta (X)\right\}  \label{C8}
\end{eqnarray}%
for any vector fields $X,Y\in \chi (M)$. In view of (\ref{mert6}) and (\ref%
{C8}), we obtain%
\begin{eqnarray}
g(\nabla _{X}D_{r},Y) &=&(-\Delta r)g(X,Y)+(2\Delta r+r(\lambda -r))\eta
(X)\eta (Y)  \notag \\
&&-r\left\{ ({\mathcal{L}}_{V}\eta )(X)\eta (Y)+({\mathcal{L}}_{V}\eta
)(Y)\eta (X)\right\}  \label{C9}
\end{eqnarray}%
for any vector fields $X,Y\in \chi (M)$. Setting $X=Y=\xi $ in (\ref{C9})
and using (\ref{mert4}), $\xi (r)=0$, we get%
\begin{equation}
\Delta r=0.  \label{C10}
\end{equation}
\end{proof}

Unlike the case of contact, we can not deduce that $r$ is constant. In other
words, on compact semi-Riemannian manifold, there may be non-constant $r$
such that $\Delta r=0$. So we prove following:

\begin{theorem}
\label{2.}If the semi-Riemannian metric of a three-dimensional
paracosymplectic manifold $(M^{3},\varphi ,\xi ,\eta ,g)$ is a Yamabe
soliton, then it has constant scalar curvature. So, if $r\neq 0$, then
manifold is $\eta $-Einstein, if $r=0$, then manifold is Ricci flat.
\end{theorem}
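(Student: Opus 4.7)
The plan is to follow the strategy of Theorem \ref{5}, exploiting the strong simplifications furnished by the paracosymplectic identities $\nabla\xi=0$ and $R(X,Y)\xi=0$. By Lemma \ref{1.} we already have $\Delta r=0$, so equation (\ref{C9}) simplifies to
\begin{equation*}
g(\nabla _{X}D_{r},Y)=r(\lambda -r)\eta (X)\eta (Y)-r\{(\mathcal{L}_{V}\eta )(X)\eta (Y)+(\mathcal{L}_{V}\eta )(Y)\eta (X)\},
\end{equation*}
which will be the working identity.

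First I would set $Y=\xi$. The left-hand side becomes $X(\xi(r))-g(D_{r},\nabla _{X}\xi)$, which vanishes thanks to (\ref{c3}) and the fact $\xi(r)=0$. On the right-hand side Lemma \ref{2}(ii), noted to apply in the paracosymplectic setting, supplies $(\mathcal{L}_{V}\eta)(\xi)=(\lambda-r)/2$, and after simplification the identity reduces to
\begin{equation*}
r\,(\mathcal{L}_{V}\eta)(X)=\tfrac{r(\lambda-r)}{2}\,\eta(X).
\end{equation*}
On the open set $U=\{p\in M : r(p)\neq 0\}$ division by $r$ yields $(\mathcal{L}_{V}\eta)(X)=\tfrac{\lambda-r}{2}\eta(X)$; substituting this expression back into the simplified (\ref{C9}) produces an exact cancellation, so that $\nabla D_{r}=0$ throughout $U$. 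In particular $R(X,Y)D_{r}=0$ on $U$, and contracting over $Y$ gives $S(X,D_{r})=0$ there.

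On the other hand, (\ref{C7}) together with $\xi(r)=0$ gives $S(X,D_{r})=\tfrac{r}{2}X(r)$ on all of $M$. Thus on $U$ we have $X(r)=0$ (because $r\neq 0$), while on $M\setminus U$ one has $r=0$; in either case $rX(r)=0$ on all of $M$, i.e.\ $X(r^{2})=0$, so $r^{2}$ is constant by connectedness, and continuity then forces $r$ itself to be a constant. Substituting this constant value into (\ref{C7}) yields the desired dichotomy: when $r\neq 0$, $S(X,Y)=\tfrac{r}{2}(g(X,Y)-\eta(X)\eta(Y))$ matches (\ref{einstein}) with $\alpha=-\beta=r/2$, so $M$ is $\eta$-Einstein; when $r=0$, $S\equiv 0$ and $M$ is Ricci flat. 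The main obstacle in the plan is the algebraic cancellation that produces $\nabla D_{r}=0$ on $U$; once that is in hand, everything else follows from the rigidity of the paracosymplectic structure together with a routine connectedness argument.
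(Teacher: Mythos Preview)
Your proof is correct and follows essentially the same route as the paper's own argument. The only minor difference is that the restriction to $U=\{r\neq 0\}$ is unnecessary: since the identity $r(\mathcal{L}_{V}\eta)(X)=\tfrac{r(\lambda-r)}{2}\eta(X)$ already carries the factor $r$, one may substitute it directly into the simplified (\ref{C9}) to obtain $\nabla D_{r}=0$ on all of $M$ without ever dividing by $r$, which is what the paper does.
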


\begin{proof}
If we differentiate covariantly $\xi (r)=0$ along the direction of an
arbitrary vector field $X$ and use (\ref{c3}), we have%
\begin{equation}
0=g(\nabla _{X}D_{r},\xi ).  \label{c11}
\end{equation}%
Putting $\xi $ for $Y$ in (\ref{C9}), using (\ref{mert4}), (\ref{C10}), (\ref%
{c11}), we obtain%
\begin{equation}
\left( (\lambda -r)\frac{r}{2}\right) \eta (X)=r({\mathcal{L}}_{V}\eta )(X)
\label{c12}
\end{equation}%
for any vector field $X\in \chi (M)$. Making use of last equation and (\ref%
{C10}) in (\ref{C9}), we deduce%
\begin{equation}
\nabla _{X}D_{r}=0  \label{c13}
\end{equation}%
for any vector field $X\in \chi (M)$. If we use (\ref{c13}) in the
Riemannian curvature tensor $R$ equation $(R(X,Y)D_{r}=\nabla _{X}\nabla
_{Y}D_{r}-\nabla _{Y}\nabla _{X}D_{r}-\nabla _{\left[ X,Y\right] }D_{r})$,
we obtain $R(X,Y)D_{r}=0$. So, it is clear that $S(X,D_{r})=0$. Taking
account (\ref{C7}), we have $\frac{r}{2}X(r)=0$ which shows that $r$ is
constant. By (\ref{C7}), one can easily deduce that if $r\neq 0$, then
manifold is $\eta $-Einstein, if $r=0$, then manifold is Ricci flat.
\end{proof}

\section{Yamabe Solitons on three dimensional Para-Kenmotsu manifolds}

For a particular choices of the function $\alpha =1$ in (\ref{genel}) we
have \textit{almost para-Kenmotsu} manifolds. If additionaly normality
conditon is fulfilled, then manifolds are called \textit{para-Kenmotsu.}. We
may refer to \cite{piotr} and references therein for more information about
para-Kenmotsu manifolds. We also recall that any para-Kenmotsu manifold
satisfies 
\begin{eqnarray}
R(X,Y)\xi &=&\eta (X)Y-\eta (Y)X,  \label{K1} \\
(\nabla _{X}\varphi )Y &=&g(\varphi X,Y)\xi -\eta (Y)\varphi X,  \label{K2}
\\
\nabla _{X}\xi &=&X-\eta (X)\xi ,  \label{K3} \\
S(X,\xi ) &=&-(n-1)\eta (X),  \label{K4}
\end{eqnarray}%
where $Q$ is the Ricci operator, $R$ is the Riemannian curvature tensor and $%
S$ is Ricci tensor defined by $S(X,Y)=g(QX,Y).$

Lemma \ref{2} also valid for para-Kenmotsu manifolds.

\begin{lemma}
\label{LK1}For any three-dimensional para-Kenmotsu manifold $(M^{3},\varphi
,\xi ,\eta ,g)$, we have%
\begin{equation}
\xi (r)=-2(r+6).  \label{K5}
\end{equation}
\end{lemma}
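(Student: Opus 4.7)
The plan is to mimic the strategy used for Lemma \ref{2.5} (the para-Sasakian analogue $\xi(r)=0$), with the crucial difference coming from the para-Kenmotsu formulas $\nabla_X\xi = X-\eta(X)\xi$ and $(\nabla_Y\eta)(X) = g(X,Y)-\eta(X)\eta(Y)$ in place of $\nabla_X\xi = -\varphi X$.

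First I would derive the three-dimensional para-Kenmotsu analogue of Lemma \ref{3}: plug $Y=Z=\xi$ into (\ref{THREE DIM CURVATURE}) and combine with (\ref{K1}) and the contraction of (\ref{K1}) giving $S(X,\xi)=-2\eta(X)$ (in particular $Q\xi=-2\xi$). This yields
\begin{equation*}
QX=\Bigl(\tfrac{r}{2}+1\Bigr)X-\Bigl(\tfrac{r}{2}+3\Bigr)\eta(X)\xi,
\end{equation*}
formally identical to (\ref{MERT4.2}).

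Next I would compute $(\nabla_Y Q)X = \nabla_Y(QX)-Q(\nabla_Y X)$. After expanding with the product rule, the terms involving $\nabla_Y X$ cancel against $Q(\nabla_Y X)$, leaving
\begin{equation*}
(\nabla_Y Q)X=\tfrac{Y(r)}{2}X-\tfrac{Y(r)}{2}\eta(X)\xi-\Bigl(\tfrac{r}{2}+3\Bigr)\bigl[(\nabla_Y\eta)(X)\,\xi+\eta(X)\,\nabla_Y\xi\bigr].
\end{equation*}
Substituting the para-Kenmotsu identities $(\nabla_Y\eta)(X)=g(X,Y)-\eta(X)\eta(Y)$ and $\nabla_Y\xi=Y-\eta(Y)\xi$ gives an explicit expression, crucially containing the term $-(\tfrac{r}{2}+3)\eta(X)Y$ which did not arise in the para-Sasakian setting (where $\nabla_Y\xi=-\varphi Y$ has zero trace).

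Then I would apply the contracted second Bianchi identity $\mathrm{trace}\{Y\mapsto(\nabla_Y Q)X\}=\tfrac12 X(r)$, evaluating the trace in an orthonormal $\varphi$-basis $\{e_i\}$ with signs $\varepsilon_i$. The four non-zero contributions tally to
\begin{equation*}
\tfrac{X(r)}{2}=\tfrac{X(r)}{2}-\tfrac{\eta(X)\xi(r)}{2}-(r+6)\eta(X),
\end{equation*}
where the term $-(r+6)\eta(X)$ comes from combining the scalar traces $\sum_i\varepsilon_i g(e_i,e_i)=3$ and $\sum_i\varepsilon_i\eta(e_i)^2=1$ against the $\nabla_Y\xi$ contribution. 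Cancelling $\tfrac{X(r)}{2}$ and setting $X=\xi$ yields (\ref{K5}).

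The main obstacle is purely bookkeeping in the trace computation: several terms that vanished in Lemma \ref{2.5} (thanks to $\varphi$ being trace-free and anti-commuting with $h$) now survive and must be combined carefully to isolate the $(r+6)$ coefficient. No deep idea is needed beyond the correct semi-Riemannian trace conventions.
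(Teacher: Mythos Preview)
Your proposal is correct and follows essentially the same approach as the paper: derive the Ricci operator formula $QX=(\tfrac{r}{2}+1)X-(\tfrac{r}{2}+3)\eta(X)\xi$ from (\ref{THREE DIM CURVATURE}), (\ref{K1}), (\ref{K4}), then trace $(\nabla_Y Q)X$ using (\ref{K3}) and the contracted Bianchi identity. The paper's proof is very terse at this last step, whereas you spell out the trace computation explicitly; your accounting is accurate (in particular the $\nabla_Y\xi$ term contributes $-(r+6)\eta(X)$ via $\sum_i\varepsilon_i g(e_i,e_i)-\sum_i\varepsilon_i\eta(e_i)^2=3-1=2$), and the final identity follows as you state.
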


\begin{proof}
If we replace $Y=Z$ by $\xi $ in (\ref{THREE DIM CURVATURE}) and use (\ref%
{K1}), (\ref{K4}) we get%
\begin{equation}
QX=\left( \frac{r}{2}+1\right) X-\left( \frac{r}{2}+3\right) \eta (X)\xi
\label{K6}
\end{equation}%
for any vector field $X\in \chi (M)$. If we use (\ref{K6}), (\ref{K3}) and (%
\ref{K4}) in the following well known formula for semi-Riemannian manifolds 
\begin{equation*}
trace\left\{ Y\rightarrow (\nabla _{Y}Q)X\right\} =\frac{1}{2}\nabla _{X}r
\end{equation*}%
we obtain the requested equation.
\end{proof}

Since the proofs of the following lemma and theorem are quite similar to
Lemma 3.3 and Theorem 1.1 of \cite{wang}, so we don't give the proofs of
them.

\begin{lemma}
\label{LK2}If the semi-Riemannian metric on a three-dimensional
para-Kenmotsu manifold $(M^{3},\varphi ,\xi ,\eta ,g)$ is a Yamabe soliton,
then the Yamabe soliton is expanding with $\lambda =-6$ and the scalar
curvature of $M^{3}$ is harmonic, that is $\Delta r=0$.
\end{lemma}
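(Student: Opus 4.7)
The plan is to mimic the proof of Theorem \ref{5} from the para-Sasakian case, with the key new ingredients being the non-trivial identity $\xi(r)=-2(r+6)$ of Lemma \ref{LK1} and the fact that $d\eta=0$ holds globally on a para-Kenmotsu manifold. Since $V$ is conformal with coefficient $\rho=(\lambda-r)/2$, applying Lemma \ref{1} to the Lie derivative of the Ricci expression $S(X,Y)=(r/2+1)g(X,Y)-(r/2+3)\eta(X)\eta(Y)$ coming from (\ref{K6}) yields the same Hessian formula as (\ref{mert9}):
\begin{equation*}
g(\nabla_XD_r,Y)=(-\Delta r+2(\lambda-r))g(X,Y)+(2\Delta r+r(\lambda-r))\eta(X)\eta(Y)-(r+6)[(\mathcal{L}_V\eta)(X)\eta(Y)+(\mathcal{L}_V\eta)(Y)\eta(X)].
\end{equation*}
Differentiating $\xi(r)=-2(r+6)$ along $X$ and using (\ref{K3}) produces $g(\nabla_XD_r,\xi)=-3X(r)-2(r+6)\eta(X)$. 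Equating this with the formula above at $Y=\xi$ and then specialising to $X=\xi$ (using Lemma \ref{2}(ii)) yields the scalar identity $\Delta r=4(\lambda+6)$.

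The next ingredient is integrability. Because $d\eta=0$ on a para-Kenmotsu manifold, the Cartan formula gives $\mathcal{L}_V\eta=d(\eta(V))$; writing $f=\eta(V)$, this reads $(\mathcal{L}_V\eta)(X)=X(f)$. Substituting this together with $\Delta r=4(\lambda+6)$ back into the $Y=\xi$ contraction above solves explicitly for $df$ as an affine combination of $dr$ and $\eta$ divided by $r+6$. Taking the exterior derivative of this $1$-form and using $d^2f=0$ together with $d\eta=0$ and the explicit polynomial dependence in $r$ forces $dr\wedge\eta=0$, so $D_r$ is collinear with $\xi$. Combined with Lemma \ref{LK1} this yields $D_r=-2(r+6)\xi$ and $Df=\tfrac{\lambda-r}{2}\xi$ on the open set where $r+6\neq 0$.

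Finally, I would compute $\nabla_XD_r$ in two different ways on this open set. Direct differentiation of $D_r=-2(r+6)\xi$ using (\ref{K3}) produces $\nabla_XD_r=-2(r+6)X+6(r+6)\eta(X)\xi$, whereas substituting the explicit $Df$ and $X(f)$ back into the Hessian formula collapses it to $\nabla_XD_r=(-\Delta r+2(\lambda-r))X+(2\Delta r-6(\lambda-r))\eta(X)\xi$. Matching the coefficient of $X$ gives $\Delta r=2(\lambda+6)$ while matching the coefficient of $\eta(X)\xi$ gives $\Delta r=3(\lambda+6)$, forcing $\lambda=-6$ and therefore $\Delta r=0$. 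The complementary case $r\equiv -6$ is immediate from Lemma \ref{LK1} and the identity $\Delta r=4(\lambda+6)$. The main obstacle will be the integrability step, since one must carefully track the polynomial dependence in $r$ to rule out a spurious closed coefficient for $\eta$; the algebra nevertheless parallels Lemma 3.3 of \cite{wang}.
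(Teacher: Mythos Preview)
Your proposal is correct. The paper does not actually supply a proof of this lemma; it simply states that the argument is ``quite similar to Lemma~3.3 of \cite{wang}'' and omits the details. Your outline reproduces precisely that argument: you derive the Hessian identity (\ref{mert9}) (which carries over verbatim because the Ricci formula (\ref{K6}) coincides with (\ref{MERT5})), obtain the key constant $\Delta r=4(\lambda+6)$ by setting $X=Y=\xi$ and using $\xi(r)=-2(r+6)$, and then exploit the fact that $d\eta=0$ on a para-Kenmotsu manifold to write $\mathcal{L}_V\eta=df$ with $f=\eta(V)$, from which $d^2f=0$ forces $dr\wedge\eta=0$. The final comparison of the two expressions for $\nabla_X D_r$ then gives $\Delta r=2(\lambda+6)$ from the transversal component, which together with $\Delta r=4(\lambda+6)$ yields $\lambda=-6$ and $\Delta r=0$. (Your stated value $3(\lambda+6)$ for the $\eta(X)\xi$-coefficient is the raw match before separating out the $\xi$-part of $X$; combined with the $X$-coefficient equation it gives the same conclusion.) The ``complementary case $r\equiv-6$'' is handled exactly as you say. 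One remark: the integrability coefficient simplifies nicely to $\text{(stuff)}/(r+6)=-\tfrac{r}{2}+6+\tfrac{\lambda}{2}$, whose $r$-derivative is $-\tfrac12\neq 0$, so no spurious closed case survives; you may want to record this explicitly to close the gap you flagged.
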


\begin{theorem}
\label{TK1}If the semi-Riemannian metric on a three-dimensional
para-Kenmotsu manifold $(M^{3},\varphi ,\xi ,\eta ,g)$ is a Yamabe soliton,
then the manifold is of constant sectional curvature $-1$, reduces to an
Einstein manifold. Furthermore, Yamabe soliton is expanding with $\lambda
=-6 $ and the vector field $V$ is Killing.
\end{theorem}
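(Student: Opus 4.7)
The plan rests on Lemma \ref{LK2}, which already delivers $\lambda = -6$ and $\Delta r = 0$. This reduces the theorem to establishing $r \equiv -6$: once this is known, $\mathcal{L}_{V} g = (\lambda - r)g = 0$ makes $V$ Killing; the Ricci formula (\ref{K6}) collapses to $QX = -2X$, so the metric is Einstein; and substituting $Q = -2\,\mathrm{id}$ into the three-dimensional curvature formula (\ref{THREE DIM CURVATURE}) produces $R(X,Y)Z = -(g(Y,Z)X - g(X,Z)Y)$, which via (\ref{constant curvature}) identifies the sectional curvature as $-1$. Since (\ref{K5}) reads $\xi(r) = -2(r+6)$, it in fact suffices to prove $\xi(r) = 0$ pointwise.

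To achieve this I would follow the template of the proof of Theorem \ref{5}, adapted to the para-Kenmotsu connection. First, compute $(\mathcal{L}_{V} S)(X,Y)$ in two independent ways. On one hand, apply $\mathcal{L}_{V}$ to the Ricci expression (\ref{K6}), substituting $\mathcal{L}_{V} g = -(r+6) g$ and $\mathcal{L}_{V} r = r(r+6)$ (the latter from (\ref{mert7}) with $\Delta r = 0$). On the other hand, (\ref{mert1}) with $n = 3$ and $\Delta r = 0$ simplifies to $(\mathcal{L}_{V} S)(X,Y) = \tfrac{1}{2}\, g(\nabla_{X} D_{r}, Y)$. Equating these two expressions and symmetrizing yields a formula for $\nabla_{X} D_{r}$ involving $(r+6)g$, $\eta \otimes \eta$ and the symmetric part of $(\mathcal{L}_{V}\eta) \otimes \eta$. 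Setting $Y = \xi$ and evaluating $g(\nabla_{X} D_{r}, \xi)$ independently as $X(\xi(r)) - g(D_{r}, \nabla_{X} \xi)$ via (\ref{K3}) and (\ref{K5}), combined with Lemma \ref{2}(ii), eliminates the $\mathcal{L}_{V}\eta$ term and puts $\nabla_{X} D_{r}$ in closed form.

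The final step substitutes this closed-form expression into the Ricci identity $R(X,Y) D_{r} = \nabla_{X} \nabla_{Y} D_{r} - \nabla_{Y} \nabla_{X} D_{r} - \nabla_{[X,Y]} D_{r}$ and contracts over an orthonormal $\varphi$-basis in the style of (\ref{mert16}) to yield $S(X, D_{r}) = 0$ for all $X$. Specializing $X = \xi$ gives $-2\,\xi(r) = 0$, and (\ref{K5}) then forces $r \equiv -6$, completing the argument. The principal obstacle I foresee is the bookkeeping in this final contraction: unlike the para-Sasakian case, (\ref{K3}) contributes a non-trivial $X - \eta(X)\xi$ piece inside every covariant derivative of $\xi$, so the combinatorics of the terms carrying $\mathcal{L}_{V}\eta$ and powers of $(r+6)$ is heavier, and care is needed to see that those contributions collapse as cleanly as they do in the proof of Theorem \ref{5}.
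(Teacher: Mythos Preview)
Your proposal is correct and follows essentially the same route the paper takes: the paper does not give an independent argument for Theorem~\ref{TK1} but states that the proof is ``quite similar to Lemma~3.3 and Theorem~1.1 of \cite{wang}'', i.e.\ the Kenmotsu analogue of Sharma's Sasakian argument, which is exactly the template you transcribe (Lemma~\ref{LK2} for $\lambda=-6$ and $\Delta r=0$; compute $\nabla_X D_r$ from the two expressions for $\mathcal{L}_V S$; feed this into $R(X,Y)D_r$ and contract to obtain $S(X,D_r)=0$; conclude $r=-6$). Your specialization $X=\xi$ at the end is slightly slicker than the $(r+2)X(r)=0$ route used in Theorem~\ref{5}, since here $S(\xi,D_r)=-2\,\xi(r)$ together with \eqref{K5} immediately pins down $r$, and the caution you flag about the extra $X-\eta(X)\xi$ contributions from \eqref{K3} is exactly the bookkeeping difference between the Kenmotsu and Sasakian computations in \cite{wang} and \cite{sharma}.
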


The proof of following theorem is similar to Theorem \ref{6}.

\begin{theorem}
\label{TK2}If the semi-Riemannian metric of a three dimensional
para-Kenmotsu manifold is a Yamabe soliton, then it is also a Ricci soliton.
\end{theorem}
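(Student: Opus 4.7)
The plan is to invoke Theorem \ref{TK1} as a black box and then perform a one-line substitution into the Ricci soliton equation \eqref{Ricci Soliton}. Concretely, Theorem \ref{TK1} tells me three things about a three-dimensional para-Kenmotsu Yamabe soliton: the manifold has constant sectional curvature $-1$, the soliton is expanding with $\lambda = -6$, and the soliton vector field $V$ is Killing. The first of these is the key input, because in a three-dimensional space form of curvature $-1$ formula \eqref{constant curvature} immediately forces the Ricci tensor to take the Einstein form $S(X,Y) = -2\,g(X,Y)$, i.e., the Ricci operator $Q = -2\,\mathrm{id}$.

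Given this, my first step would be to write out \eqref{Ricci Soliton} for the pair $(g,V)$ with some undetermined constant $\mu$: we want
\begin{equation*}
(\mathcal{L}_{V}g)(X,Y) + 2S(X,Y) + 2\mu\, g(X,Y) = 0.
\end{equation*}
Since $V$ is Killing by Theorem \ref{TK1}, the first term vanishes, and after inserting $S(X,Y) = -2\,g(X,Y)$ the equation reduces to $(2\mu - 4)\,g(X,Y) = 0$. Non-degeneracy of $g$ then forces $\mu = 2$, so the Yamabe soliton data $(g,V)$ automatically solves the Ricci soliton equation with this value of $\mu$.

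The second step is simply the interpretation: with $\mu = 2 > 0$, the paper's sign convention (shrinking/steady/expanding according as $\mu$ is negative/zero/positive) places this Ricci soliton in the expanding class. This matches the parallel statement of Theorem \ref{6}, whose proof structure this argument essentially mirrors --- the only real change being that in the para-Kenmotsu case one does not need to assume $r = -6$ beforehand because Theorem \ref{TK1} already delivers that conclusion for free.

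There is no genuine obstacle here: once Theorem \ref{TK1} is available, the whole argument is a substitution and a sign check. The one point that requires a brief justification, rather than being purely mechanical, is the passage from constant sectional curvature $-1$ in dimension three to $Q = -2\,\mathrm{id}$, which I would handle by citing \eqref{constant curvature} and contracting, or equivalently by plugging $r = -6$ into \eqref{K6} and observing that the $\eta\otimes\xi$ term drops out.
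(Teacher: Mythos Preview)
Your proposal is correct and follows essentially the same route as the paper, which simply says the proof is similar to that of Theorem \ref{6}: invoke Theorem \ref{TK1} to obtain $Q=-2\,\mathrm{id}$ and $V$ Killing, then read off $\mu=2$ from \eqref{Ricci Soliton}. Your observation that the hypothesis $r=-6$ need not be imposed in the para-Kenmotsu case because Theorem \ref{TK1} already supplies it is exactly the point.
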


\begin{proposition}
\label{PK1}Let $(M^{3},\varphi ,\xi ,\eta ,g)$ be a three-dimensional
para-Kenmotsu manifold. If $M^{3}$ admits a Yamabe soliton for a vector
field $V$ and a constant $\lambda $, then $V$ can not be pointwise collinear
with $\xi $.
\end{proposition}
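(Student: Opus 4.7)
The plan is to argue by contradiction: assume $V=b\xi$ for some $b\in C^{\infty}(M^{3})$ and derive that $b$ must vanish identically, which contradicts the (implicit) nontriviality of a soliton vector field pointwise collinear with $\xi$. The decisive observation is that Theorem \ref{TK1} has already pinned down the geometry so tightly ($\lambda=r=-6$, and $V$ is Killing) that the Yamabe soliton condition collapses to $\mathcal{L}_{V}g=0$, after which the para-Kenmotsu identity $\nabla_{X}\xi=X-\eta(X)\xi$ from (\ref{K3}) does the rest.

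First I would expand $\nabla_{X}V$ via (\ref{K3}):
\[
\nabla_{X}V=X(b)\xi+b\bigl(X-\eta(X)\xi\bigr),
\]
and symmetrise to obtain
\[
(\mathcal{L}_{V}g)(X,Y)=X(b)\eta(Y)+Y(b)\eta(X)+2b\bigl(g(X,Y)-\eta(X)\eta(Y)\bigr).
\]
By Theorem \ref{TK1}, the left-hand side is identically zero.

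Next I would restrict $X,Y$ to the paracontact distribution $\mathcal{D}=\ker\eta$. All boundary terms involving $\eta$ drop out, leaving $2b\,g(X,Y)=0$ for every $X,Y\in\mathcal{D}$. Since $g|_{\mathcal{D}}$ is nondegenerate of signature $(1,1)$ on a two-dimensional fibre, choosing $X=Y\in\mathcal{D}$ with $g(X,X)\neq 0$ forces $b=0$ pointwise, and hence $V\equiv 0$.

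The only real obstacle is interpretive: strictly speaking $V\equiv 0$ does solve the Yamabe soliton equation (trivially collinear with $\xi$), so the proposition must be read as excluding this degenerate case, and the computation above — showing that \emph{every} pointwise-collinear candidate collapses to the zero field — is exactly the intended contradiction. If one preferred to avoid invoking Theorem \ref{TK1}, the same conclusion is reachable directly: feeding the full equation $\mathcal{L}_{V}g=(\lambda-r)g$ into the Lie-derivative formula, evaluating on $\mathcal{D}$ gives $2b=\lambda-r$, evaluating at $(\xi,\xi)$ gives $\xi(b)=(\lambda-r)/2=b$, whence $\xi(r)=-2\xi(b)=-2b$; combining with Lemma \ref{LK1} ($\xi(r)=-2(r+6)$) and Lemma \ref{LK2} ($\lambda=-6$) again forces $b=0$.
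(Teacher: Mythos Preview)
Your argument is correct and follows essentially the same route as the paper: assume $V=b\xi$, use Theorem~\ref{TK1} (so that $\mathcal{L}_{V}g=0$), expand via $\nabla_{X}\xi=X-\eta(X)\xi$ to obtain $X(b)\eta(Y)+Y(b)\eta(X)+2b\bigl(g(X,Y)-\eta(X)\eta(Y)\bigr)=0$, and restrict to the contact distribution to force $b=0$. You are in fact more explicit than the paper about the reliance on Theorem~\ref{TK1} (the paper writes the Killing equation directly from~(\ref{yamabe soliton}) without comment), and your remark that the conclusion must be read as ``$V$ degenerates to the zero field'' is a fair clarification of what the proposition is really asserting.
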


\begin{proof}
Let $V$ be a pointwise collinear vector field with the structure vector
field $\xi $, that is $V=b\xi $, where $b$ is a smooth function on $M^{3}$.
From (\ref{yamabe soliton}), we obtain%
\begin{equation}
g(\nabla _{X}V,Y)+g(\nabla _{Y}V,X)=0  \label{K7}
\end{equation}%
for any vector fields $X,Y\in \chi (M)$.

Taking $V=b\xi $ in (\ref{K7}) and using (\ref{K3}), we have%
\begin{equation}
X(b)\eta (Y)+Y(b)\eta (X)+2bg(X,Y)-2b\eta (X)\eta (Y)=0  \label{K8}
\end{equation}%
for any vector fields $X$ and $Y$. We have $f=0$, for $X$ and $Y$ belongs to
the contact distribution, in the last equation.
\end{proof}

\section{EXAMPLES}

Now, we will give examples which support Theorem \ref{5} and Theorem \ref%
{TK1} .

\begin{example}
\label{ex1} We consider the $3$-dimensional manifold%
\begin{equation*}
M^{3}=\{(x,y,z)\in 
\mathbb{R}
^{3},z\neq 0\}
\end{equation*}%
and the vector fields%
\begin{equation*}
\varphi e_{2}=e_{1}=2y\text{ }\frac{\partial }{\partial x}+z\frac{\partial }{%
\partial z},\text{ \ \ }\varphi e_{1}=e_{2}=\frac{\partial }{\partial y},%
\text{ \ \ }\xi =e_{3}=\frac{\partial }{\partial x}.
\end{equation*}%
The $1$-form $\eta =-\frac{2y}{z}dz$ defines an almost paracontact structure
on $M$ with characteristic vector field $\xi =\frac{\partial }{\partial x}$.
Let $g$, $\varphi $ be the semi-Riemannian metric and the $(1,1)$-tensor
field respectively given by 
\begin{eqnarray*}
g &=&\left( 
\begin{array}{ccc}
1 & 0 & -\frac{y}{z} \\ 
0 & -1 & 0 \\ 
-\frac{y}{z} & 0 & \frac{1+4y^{2}}{z^{2}}%
\end{array}%
\right) ,\text{ } \\
\varphi \text{\ } &=&\left( 
\begin{array}{ccc}
0 & 2y & 0 \\ 
0 & 0 & \frac{1}{z} \\ 
0 & z & 0%
\end{array}%
\right) ,
\end{eqnarray*}%
with respect to the basis $\frac{\partial }{\partial x},\frac{\partial }{%
\partial y},\frac{\partial }{\partial z}$.
\end{example}

Using (\ref{nablaksi}) we have%
\begin{equation*}
\begin{array}{ccc}
\nabla _{e_{1}}e_{1}=0,~ & \nabla _{e_{2}}e_{1}=\xi , & ~~~\nabla _{\xi
}e_{1}=-e_{2}, \\ 
\nabla _{e_{1}}e_{2}=-\xi , & \nabla _{e_{2}}e_{2}=0,~ & \nabla _{\xi
}e_{2}=-e_{1}, \\ 
\nabla _{e_{1}}\xi =-e_{2}, & ~~\nabla _{e_{2}}\xi =-e_{1}, & \nabla _{\xi
}\xi =0.%
\end{array}%
\end{equation*}%
Hence the manifold is a 3-dimensional para-Sasakian manifold. One can easily
compute, 
\begin{equation}
\begin{array}{ccc}
R(e_{1},e_{2})\xi =0, & R(e_{2},\xi )\xi =-e_{2}, & R(e_{1},\xi )\xi =-e_{1},
\\ 
R(e_{1},e_{2})e_{2}=-3e_{1}, & R(e_{2},\xi )e_{2}=-\xi , & R(e_{1},\xi
)e_{2}=0, \\ 
R(e_{1},e_{2})e_{1}=-3e_{2}, & R(e_{2},\xi )e_{1}=0, & R(e_{1},\xi
)e_{1}=\xi .%
\end{array}
\label{r1}
\end{equation}%
Using (\ref{r1}), we have constant scalar curvature as follows,%
\begin{equation*}
r=S(e_{1},e_{1})-S(e_{2},e_{2})+S(\xi ,\xi )=2.
\end{equation*}%
Because of scalar curvature $r\neq -6$, we can conclude that $V$ is an
infinitesimal automorphism of the paracontact metric structure on $M^{3}$.

\begin{example}
\label{ex2} We consider the $3$-dimensional manifold%
\begin{equation*}
M=\{(x,y,z)\in 
\mathbb{R}
^{3},z\neq 0\}
\end{equation*}%
and the vector fields%
\begin{equation*}
X=\text{ }\frac{\partial }{\partial x},\text{ \ \ }\varphi X=\frac{\partial 
}{\partial y},\text{ \ \ }\xi =(x+2y)\frac{\partial }{\partial x}+(2x+y)%
\frac{\partial }{\partial y}+\frac{\partial }{\partial z}.
\end{equation*}%
The $1$-form $\eta =dz$ defines an almost paracontact structure on $M$ with
characteristic vector field $\xi =(x+2y)\frac{\partial }{\partial x}+(2x+y)%
\frac{\partial }{\partial y}+\frac{\partial }{\partial z}$. Let $g$, $%
\varphi $ be the semi-Riemannian metric and the $(1,1)$-tensor field given
by 
\begin{eqnarray*}
g &=&\left( 
\begin{array}{ccc}
1 & 0 & -\frac{1}{2}(x+2y) \\ 
0 & -1 & \frac{1}{2}(2x+y) \\ 
-\frac{1}{2}(x+2y) & \frac{1}{2}(2x+y) & 1-(2x+y)^{2}+(x+2y)^{2}%
\end{array}%
\right) ,\text{ } \\
\varphi \text{\ } &=&\left( 
\begin{array}{ccc}
0 & 1 & -(2x+y) \\ 
1 & 0 & -(x+2y) \\ 
0 & 0 & 0%
\end{array}%
\right) ,
\end{eqnarray*}%
with respect to the basis $\frac{\partial }{\partial x},\frac{\partial }{%
\partial y},\frac{\partial }{\partial z}$.
\end{example}

Using (\ref{K3}) we have%
\begin{equation*}
\begin{array}{ccc}
\nabla _{X}X=-\xi ,~ & \nabla _{\varphi X}X=0, & ~~~\nabla _{\xi
}X=-2\varphi X, \\ 
\nabla _{X}\varphi X=0, & \nabla _{\varphi X}\varphi X=\xi ,~ & \nabla _{\xi
}\varphi X=-2X, \\ 
\nabla _{X}\xi =X, & ~~\nabla _{\varphi X}\xi =\varphi X, & \nabla _{\xi
}\xi =0.%
\end{array}%
\end{equation*}%
Hence the manifold is a para-Kenmotsu manifold. One can easily compute, 
\begin{equation}
\begin{array}{ccc}
R(X,\varphi X)\xi =0, & R(\varphi X,\xi )\xi =-\varphi X, & R(X,\xi )\xi =-X,
\\ 
R(X,\varphi X)\tilde{\varphi}X=X, & R(\varphi X,\xi )\tilde{\varphi}X=-\xi ,
& R(X,\xi )\varphi X=0, \\ 
R(X,\varphi X)X=\varphi X, & R(\varphi X,\xi )X=0, & R(X,\xi )X=\xi .%
\end{array}
\label{r2}
\end{equation}%
Using (\ref{r2}), we have constant scalar curvature as follows,%
\begin{equation*}
\tau =S(X,X)-S(\varphi X,\varphi X)+S(\xi ,\xi )=-6.
\end{equation*}%
Because of scalar curvature $r=-6$, from Theorem \ref{TK1}, we can conclude
that $M$ is an Einstein manifold.

\end{document}